\documentclass[titlepage,leqno,11pt]{amsart}

\usepackage{graphicx}
\usepackage{psfrag}
\usepackage{latexsym}
\usepackage{amssymb}
\usepackage{amsfonts}
\usepackage{amsmath}
\usepackage{amsthm}
\usepackage{color}
\usepackage[margin=1in]{geometry}

\newcommand\R{{\mathbb R}}
\newcommand\V{{\mathbb V}}

\newcommand{\E}{\mathbb{E}}

\newtheorem{theorem}{Theorem}[section]

\newtheorem{corollary}[theorem]{Corollary}
\newtheorem{lemma}[theorem]{Lemma}
\newtheorem{proposition}[theorem]{Proposition}

\theoremstyle{definition}
\newtheorem{definition}[theorem]{Definition}

\theoremstyle{remark}
\newtheorem{remark}[theorem]{Remark}
\newtheorem{question}[theorem]{Question}

\numberwithin{equation}{section}

\newcommand{\ep}{\epsilon}
\newcommand{\del}{\delta}

\newcommand{\reals}{\mathbb{R}}
\newcommand{\Heis}{\mathbb{H}}

\newcommand{\nats}{\mathbb{N}}

\newcommand{\nbhd}{\mathcal{N}}

\newcommand{\inv}{^{-1}}

\newcommand{\ovl}{\overline}

\newcommand{\til}{\widetilde}

\newcommand{\Hdim}{\mathcal{H}}
\newcommand{\cF}{{\mathcal{F}}}
\newcommand{\subeq}{\subseteq}
\newcommand{\supeq}{\supseteq}
\newcommand{\bslash}{\backslash}

\newcommand{\mbf}{\mathbf}
\newcommand{\Lloc}{{\rm L}_{\loc}}

\newcommand{\cX}{\mathcal{X}}

\newcommand{\cH}{\Hdim}

\def\loc{\operatorname{loc}}
\def\diam{\operatorname{diam}}
\def\card{\operatorname{card}}
\def\dist{\operatorname{dist}}

\def\Lip{\operatorname{Lip}}

\def\spa{\operatorname{span}}

\def\Xint#1{\mathchoice
{\XXint\displaystyle\textstyle{#1}}%
{\XXint\textstyle\scriptstyle{#1}}%
{\XXint\scriptstyle\scriptscriptstyle{#1}}%
{\XXint\scriptscriptstyle\scriptscriptstyle{#1}}%
\!\int}
\def\XXint#1#2#3{{\setbox0=\hbox{$#1{#2#3}{\int}$}
\vcenter{\hbox{$#2#3$}}\kern-.5\wd0}}

\def\mint{\Xint-}

\author[Z.M. Balogh, J.T. Tyson, and K. Wildrick]{Zolt\'an M. Balogh, Jeremy T. Tyson, Kevin Wildrick}

% Emails and addresses of the authors
% \address[email]{{\sc author's name}: Postal address.}
% Please add one \address for each author
\address{Z. Balogh: Mathematisches Institut, Universit\"at Bern, Sidlerstrasse 5, 3012 Bern, Switzerland ({\tt balogh.zoltan@math.unibe.ch})}

\address{J.T. Tyson: Department of Mathematics, University of Illinois at Urbana-Champaign, 1409 W Green Street, Urbana, IL 61801, USA ({\tt tyson@math.uiuc.edu})}

\address{K. Wildrick: Mathematisches Institut, Universit\"at Bern, Sidlerstrasse 5, 3012 Bern, Switzerland ({\tt kevin.wildrick@math.unibe.ch})}
% The 2010 AMS Classification

\keywords{Sobolev mapping, Ahlfors regularity, Poincar\'e inequality, foliation, David--Semmes regular mapping \\ 2010 \emph{Mathematics subject classification.} Primary: 46E35, 28A78; Secondary: 46E40, 53C17, 30L99}

\thanks{The first and third authors were supported by the Swiss National Science Foundation, European Research Council Project CG-DICE, and the European Science Council Project HCAA. The second author was supported by NSF grants DMS-0901620 and DMS-1201875.}

\begin{document}

\title[Dimension distortion in metric spaces]{Dimension distortion by Sobolev mappings in foliated metric spaces}

\begin{abstract}
We quantify the extent to which a supercritical Sobolev mapping can increase the dimension of subsets of its domain, in the setting of metric measure spaces supporting a Poincar\'e inequality. We show that the set of mappings that distort the dimensions of sets by the maximum possible amount is a prevalent subset of the relevant function space. For foliations of a metric space $X$ defined by a David--Semmes regular mapping $\pi \colon X \to W$, we quantitatively estimate, in terms of Hausdorff dimension in $W$, the size of the set of leaves of the foliation that are mapped onto sets of higher dimension. We discuss key examples of such foliations, including foliations of the Heisenberg group by left and right cosets of horizontal subgroups.   
\end{abstract}

\date{\today}

\maketitle

%\setcounter{tocdepth}{1}
%\tableofcontents

\section{Introduction}

Let $N$ and $N'$ be differentiable manifolds of dimensions $n \geq n'$. For every $y \in N'$, the preimage $\pi\inv(y)$ under a submersion $\pi \colon N \to N'$ is a submanifold of $N$ of dimension $n-n'$. In this way, the map $\pi$ defines a foliation of $N$ parameterized by $N'$. The canonical such submersion is the orthogonal projection of $\reals^n$ onto the codimension one subspace spanned by all but the $i^{\text{th}}$ coordinate vector, $i=1,\hdots,n$. The resulting foliation of $\reals^n$ by parallel straight lines features in the theory of Sobolev mappings. Indeed, a mapping $f\in L^p(\reals^n,\reals^m)$ is in the Sobolev space $W^{1,p}(\reals^n;\reals^m)$ if and only if, up to choice of representative, for each $i=1,\hdots, n$, each coordinate function of $f$ is absolutely continuous on $\Hdim^{n-1}$-almost every line in the above foliation, and the resulting partial derivatives are in $L^{p}(\reals^n)$.

Sobolev mappings between metric spaces are of growing interest and importance in modern analysis, geometric group theory, and geometric measure theory.  While there are several (often equivalent) definitions of such Sobolev maps, in each approach Sobolev mappings are assumed or shown to be absolutely continuous along ``almost every curve".   When the space under consideration is equipped with a foliation by curves that is parameterized by another space $W$, it is natural that ``almost every curve" refer to a measure on $W$, as in the Euclidean case above. In other words, a Sobolev mapping should preserve or decrease the dimension of almost every leaf of a given foliation by curves.

Our first result states that while a Sobolev mapping may substantially increase the dimension of the remaining measure zero set of curves, this increase is controlled. In fact, there are universal bounds on the dimension increase under a supercritical Sobolev mapping. Here and henceforth in this paper we denote by $\cH^\alpha_Y$ the $\alpha$-dimensional Hausdorff measure in a metric space $Y$, and by $\dim_Y A$ the Hausdorff dimension of a subset $A$ of the space $Y$. The assumptions on the metric space $X$ in Theorem \ref{Kaufman metric intro} are standard and explained in Section \ref{notation-section} below, as is the notion of a Sobolev space based on upper gradients.

\begin{theorem}\label{Kaufman metric intro}  Let $X$ be a proper metric measure space that is locally $Q$-homogeneous and supports a local $Q$-Poincar\'e inequality. Let $Y$ be any metric space. For $p>Q$, if $f \colon X \to Y$ is a continuous mapping that has an upper gradient in $\Lloc^p(X)$, then
\begin{equation}\label{Kaufman-estimate-intro}
\dim_Y f(E)\leq \frac{p\dim_X E}{p-Q+\dim_X E}
\end{equation}
for any subset $E \subeq X$. Moreover, if $\Hdim_X^Q(E)=0$, then $\Hdim_Y^Q(f(E))=0$. 
\end{theorem}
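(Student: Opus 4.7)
The plan is to combine a Morrey-type pointwise H\"older estimate for supercritical Sobolev mappings with a covering argument whose efficiency rests on H\"older's inequality. In a locally $Q$-homogeneous space supporting a local $Q$-Poincar\'e inequality, the standard Heinonen--Koskela machinery yields the following for each upper gradient $g \in \Lloc^p(X)$ of $f$: there exist a dilation $\Lambda \geq 1$ and a locality scale $r_0 > 0$ such that for every ball $B = B(x,r)$ with $r \leq r_0$,
\begin{equation*}
\diam f(B) \leq C\, r^{1-Q/p}\, \lambda(B), \qquad \lambda(B) := \left(\int_{\Lambda B} g^p\, d\mu\right)^{1/p}.
\end{equation*}
This metric avatar of Morrey's embedding is obtained by iterating the $Q$-Poincar\'e inequality on a chain of dyadically shrinking balls, with the resulting telescoping series converging precisely because $p > Q$.

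Fix $s > \dim_X E$ and $\varepsilon > 0$. Use a $5r$-covering lemma in the doubling space $X$ to cover $E$ by balls $B_i = B(x_i,r_i)$ with $r_i \leq r_0$, $\sum_i r_i^s < \Hdim_X^s(E) + \varepsilon$, and the enlargements $\Lambda B_i$ of bounded overlap inside a fixed neighborhood $U$ of $E$. Set $t := sp/(p-Q+s)$; a direct computation yields the key algebraic identity $t(1 - Q/p)\cdot p/(p-t) = s$. Applying H\"older's inequality with conjugate exponents $p/t$ and $p/(p-t)$ produces
\begin{equation*}
\sum_i (\diam f(B_i))^t \leq C^t \sum_i r_i^{t(1-Q/p)}\lambda(B_i)^t \leq C^t \Bigl(\sum_i r_i^s\Bigr)^{(p-t)/p}\Bigl(\sum_i \lambda(B_i)^p\Bigr)^{t/p}.
\end{equation*}
The bounded overlap gives $\sum_i \lambda(B_i)^p \leq C \int_U g^p\, d\mu < \infty$, so $\{f(B_i)\}$ is a covering of $f(E)$ of controlled $t$-diameter sum. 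This bounds $\Hdim_Y^t(f(E))$; letting $s \searrow \dim_X E$ yields \eqref{Kaufman-estimate-intro}. For the second assertion, take $s = Q$, whence also $t = Q$; if $\Hdim_X^Q(E) = 0$ the covering can be chosen with $\sum_i r_i^Q$ arbitrarily small, forcing $\sum_i (\diam f(B_i))^Q \to 0$ and hence $\Hdim_Y^Q(f(E)) = 0$.

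The main technical obstacle is the Morrey-type estimate on $\diam f(B)$ with quantitative dependence on the $L^p$-norm of $g$ over an enlarged ball. In the metric setting this is not a direct consequence of the Poincar\'e inequality, but rather of a telescoping chain of Poincar\'e estimates whose geometric series converges only because $p > Q$; care is needed in passing from the purely local hypotheses on $X$ to an estimate that is uniform on a relatively compact neighborhood of $E$, using the properness of $X$. The rest of the argument is a careful bookkeeping exercise: choosing the exponent $t$ to make the H\"older inequality work, and arranging the covering so that the bounded overlap of the $\Lambda$-enlargements can be invoked.
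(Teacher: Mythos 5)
Your overall strategy (a metric Morrey estimate, a covering of $E$ with small $\sum_i r_i^s$, then H\"older's inequality with the exponent $t=ps/(p-Q+s)$) is the same as the paper's, and the algebra in the H\"older step is correct. The gap is in the covering step: you assume you can produce, from the $5r$-covering lemma, a cover $\{B_i\}$ of $E$ with $\sum_i r_i^s$ arbitrarily small \emph{and} with the dilated balls $\Lambda B_i$ of bounded overlap, and your bound $\sum_i \lambda(B_i)^p\le C\int_U g^p\,d\mu$ depends on exactly this. The $5B$-covering theorem does not give it: it lets you select pairwise disjoint balls whose $5$-fold dilates cover $E$, i.e.\ you get disjointness of the \emph{shrunken} balls of a covering, not bounded overlap of \emph{enlarged} ones. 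Since your radii must range over arbitrarily many scales (that is forced by making $\sum_i r_i^s$ small), dilates of disjoint balls of different scales can pile up without bound at a point (already in $\R$: the disjoint intervals $[2^{-k-1},2^{-k}]$ have all their $5$-fold dilates containing $0$), and in a general doubling metric space there is no Besicovitch-type covering theorem to repair this. This is precisely the obstacle the paper singles out as the main difficulty ("nice essentially disjoint coverings of sets need not exist"), compounded by the fact that the dilation $\Lambda$ coming from the Poincar\'e/Morrey machinery may be strictly larger than $1$.

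The paper's repair is the maximal-function Lemma~\ref{shrink}: for an exponent $q<p$ one replaces $g^q$ by $\til g=M(g^q)\in\Lloc^{p/q}$ and obtains a Morrey estimate (Proposition~\ref{better Morrey}) in which the integral is taken over the \emph{shrunken} ball $B(x,r/5)$; then the disjointness of the shrunken balls, which the $5B$-lemma does provide, is enough to sum, and one recovers \eqref{Kaufman-estimate-intro} by letting $q\to p$ and $t\to\dim_X E$. If you want to keep your cleaner exponent-$p$ bookkeeping you must either add such a device or restrict to spaces with a Besicovitch-type property, which the hypotheses do not grant. Note also that your treatment of the "moreover" statement ($\Hdim^Q_X(E)=0\Rightarrow\Hdim^Q_Y(f(E))=0$) leans on the same unsupported covering claim; the paper instead handles the critical exponent by a pointwise maximal-function estimate $\diam f(B(x,r))\le CrM(g)(x)^{1/q}$, splitting off an exceptional set of small $\mu$-measure on which one repeats the covering argument and a good set on which $f$ is locally Lipschitz.
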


%\begin{theorem}\label{Kaufman-intro} Let $\Omega \subeq \reals^n$ be a domain and let $Y$ be a Banach space. Each continuous mapping $f \colon \Omega \to Y$ in ${\rm W}^{1,p}(\Omega;Y)$, $p>n$, satisfies the following universal bound on dimension increase: for any Borel set $E\subset\R^n$ with $\cH^s(E)<\infty$ for some $s<n$, one has $\cH_Y^\alpha(f(E))=0$, where
%\begin{equation}\label{alpha-value}
%\alpha = \frac{p s}{p-n +s}.
%\end{equation}
%Moreover, if $\cH^n(E)=0$, then $\cH_Y^n(f(E))=0$ as well.
%\end{theorem}

 In the setting of quasiconformal mappings between domains in Euclidean space, such dimension distortion estimates have been known for many years; see, for instance, Gehring \cite{Gehring73}, Gehring--V\"ais\"al\"a\ \cite{GehringV73} and Astala \cite{Astala1994}. Theorem \ref{Kaufman metric intro} was also known in the Euclidean Sobolev setting; see for instance Kaufman \cite[Theorem 1]{Kaufman}. 

The main difficulty in proving Theorem \ref{Kaufman metric intro} is that in the general metric setting, the usual analogue of a Euclidean dyadic cube need not be bi-Lipschitz equivalent to a ball of a comparable diameter. Hence, nice essentially disjoint coverings of sets need not exist. We overcome this obstacle by using maximal functions; see Lemma~\ref{shrink}.  In the case that the set $E$ under consideration in Theorem~\ref{Kaufman metric intro} is Ahlfors regular, we reach the stronger conclusion that $f(E)$ has zero measure in the appropriate dimension; see Theorem \ref{Q-regular} below.

We also prove that Theorem \ref{Kaufman metric intro} is sharp whenever the domain is Ahlfors regular, and that the collection of Sobolev mappings which increase the dimension of a given compact set $E$ by the maximum possible amount is \emph{prevalent}, a notion of genericity in Banach spaces (see \cite{Christensen}, \cite{Aronszajn}, \cite{NullCoincide}, \cite{hunt-sauer-yorke1992}, \cite{sauer-yorke1997}). Here the relevant Banach space is a Newtonian--Sobolev space, defined in Section \ref{Morrey section}.

\begin{theorem}\label{universal-sharp}
Let $(X,d,\mu)$ be a metric measure space that is locally Ahlfors $Q$-regular for some $Q>0$. For $0\leq s \leq Q$ and $p>Q$, set
$$\alpha = \frac{ps}{p-Q+s}.$$
Let $E$ be a compact subset of $X$ such that $\Hdim^s(E)>0$.  Then for all $N \in \nats$ greater than $\alpha$, there exists a continuous map $f:X \to \R^N$ such that $f$ has an upper gradient in ${\rm L}^p(X)$ and $\dim f(E) \geq \alpha$. Moreover, the set of such functions forms a prevalent set in the Newtonian--Sobolev space ${\rm N}^{1,p}(X;\reals^N)$.
\end{theorem}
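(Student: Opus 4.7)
\emph{Plan.} Both assertions will follow from a single prevalence argument built around a randomized family of Newtonian--Sobolev probe maps; the existence claim is then automatic. The starting ingredient is Frostman's lemma: since $E$ is compact with $\cH^s(E) > 0$, there is a non-trivial Radon measure $\nu$ supported on $E$ with $\nu(B(x,r)) \leq C r^s$ for every ball. Lower bounds on the dimension of images will be obtained via the Riesz-energy criterion: to conclude $\dim g(E) \geq \alpha'$ it suffices to verify that
\[
I_{\alpha'}(g_*\nu) := \int_X \int_X |g(x) - g(y)|^{-\alpha'}\, d\nu(x)\, d\nu(y) < \infty.
\]
Writing $\beta = (p-Q+s)/p$, so that $s/\beta = \alpha$, I construct the probe as follows. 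Using local Ahlfors $Q$-regularity of $\mu$, choose for each $k \geq 0$ an efficient cover of $E$ by balls of radius $r_k = 2^{-k}$ centered at points of $E$, with associated Lipschitz cutoffs $\varphi_{k,i}$ satisfying $\Lip(\varphi_{k,i}) \sim 2^k$. Define
\[
\psi_\omega(x) = \sum_{k \geq 0} a_k \sum_i \omega_{k,i}\, \varphi_{k,i}(x),
\]
where $\omega_{k,i}$ are independent $\R^N$-valued random vectors uniform on $[-1,1]^N$ and $a_k \sim 2^{-k\beta}$ (possibly adjusted by a subpolynomial correction). Using the Frostman bound to control the cardinality of each cover by $\lesssim 2^{ks}$, a direct gradient computation yields $\E\|\psi_\omega\|_{N^{1,p}}^p < \infty$, so $\psi_\omega \in N^{1,p}(X;\R^N)$ almost surely, with a continuous representative by the Morrey-type embedding available when $p > Q$.

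To establish prevalence, fix any $f_0 \in N^{1,p}(X;\R^N)$ and any $\alpha' < \alpha$. The goal is to show that $I_{\alpha'}((f_0 + \psi_\omega)_* \nu) < \infty$ almost surely. By Fubini,
\[
\E\bigl[I_{\alpha'}((f_0 + \psi_\omega)_*\nu)\bigr] = \int\!\int \E\,|f_0(x)-f_0(y) + \psi_\omega(x)-\psi_\omega(y)|^{-\alpha'}\, d\nu(x)\, d\nu(y).
\]
The central technical estimate is that, for $x,y \in E$ with $d(x,y) \sim 2^{-k_0}$, conditioning on all $\omega_{k,i}$ with $k \neq k_0$ renders the residual distribution of $\psi_\omega(x)-\psi_\omega(y)$ absolutely continuous on $\R^N$ with density bounded by $C\,d(x,y)^{-N\beta}$ on a ball of radius $\sim d(x,y)^\beta$. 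Provided $\alpha' < N$, this yields
\[
\E\,|v + \psi_\omega(x) - \psi_\omega(y)|^{-\alpha'} \lesssim d(x,y)^{-\alpha'\beta}
\]
uniformly in $v \in \R^N$, and the Frostman bound then makes $\iint d(x,y)^{-\alpha'\beta}\, d\nu(x)\, d\nu(y)$ finite exactly when $\alpha'\beta < s$, i.e.\ when $\alpha' < \alpha$. Intersecting the resulting full-measure events along a sequence $\alpha'_j \nearrow \alpha$ produces a Borel probability measure on $N^{1,p}(X;\R^N)$ that serves as a probe for prevalence.

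The main obstacle is the conditional density estimate in the Fubini step: in the metric setting one must arrange the cover of $E$ so that at least a definite number of cutoffs $\varphi_{k_0, i}$ at the critical scale genuinely separate $x$ from $y$, producing non-degenerate randomness in $\psi_\omega(x) - \psi_\omega(y)$. This requires a careful combinatorial choice of cover exploiting the doubling property of $\mu$, and perhaps a tensor-product cutoff construction to distribute the randomness evenly over the $N$ coordinate directions. A secondary technical point is that the natural weights $a_k = 2^{-k\beta}$ make $\E\|\psi_\omega\|_{N^{1,p}}^p$ borderline divergent at the critical exponent; this is the Kaufman sharpness phenomenon itself, handled by a logarithmic correction to $a_k$ so that the Sobolev norm converges while $\beta$ is preserved asymptotically.
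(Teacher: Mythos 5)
Your plan shares its skeleton with the paper's argument (Frostman measure, multiscale Lipschitz bumps with i.i.d.\ random coefficients, Riesz-energy criterion, a prevalence step via a compactly supported probe measure), but the choice of \emph{deterministic} weights $a_k \sim 2^{-k\beta}$ introduces a genuine gap in the upper gradient estimate. You invoke ``the Frostman bound to control the cardinality of each cover by $\lesssim 2^{ks}$,'' but Frostman gives the opposite information: $\nu(B(x,r)) \leq C r^s$ is an \emph{upper} bound on the measure of balls, hence a \emph{lower} bound on the cardinality of a cover, never an upper one. The set $E$ is only assumed to satisfy $\cH^s(E)>0$; its Hausdorff dimension (and Assouad dimension) can be as large as $Q$, in which case a maximal $2^{-k}$-net in $E$ has $\sim 2^{kQ}$ points, not $\sim 2^{ks}$. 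With the weight $a_k = 2^{-k\beta}$ and $\mu(2B)\sim 2^{-kQ}$, the scale-$k$ contribution to $\|\Lip\psi_\omega\|_{L^p}^p$ is $\sim 2^{kp}a_k^p N_k 2^{-kQ} = 2^{-ks}N_k$, which diverges if $N_k \gg 2^{ks}$, and no subpolynomial correction to $a_k$ can compensate for an exponential overcount. The paper resolves exactly this obstacle by using the \emph{variable} coefficient $\nu(100B)^{1/\alpha}$ in place of $a_k$: the Sobolev norm at scale $k$ then reduces, after invoking bounded overlap, to $\lesssim 2^{k((p-Q)-s(p/\alpha-1))}\sum_B\nu(100B) \lesssim \nu(X)$, independently of cardinality, because the sum $\sum_B\nu(100B)$ is controlled by the total mass of $\nu$ regardless of how many balls intersect the support. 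The same cancellation is essential in the energy estimate: the reciprocal of the critical coefficient contributes $\nu(100B_n)^{-\alpha'/\alpha}$, which is paired against $\nu(E_n)\leq\nu(100B_n)$ to leave a positive power of $\nu(100B_n)$, protecting against the case of balls with small $\nu$-mass. Your deterministic weight discards this self-normalization.

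On the prevalence step: the paper does not attempt a conditional density estimate for the infinite-dimensional random series. Instead it takes the single deterministic map $g$ already known (from the energy estimate) to satisfy $\iint |g(x)-g(y)|^{-\alpha'}\,d\nu\,d\nu < \infty$, and uses as probe measure the pushforward of Lebesgue measure on a compact cube $Z$ of $N\times N$ matrices under $L\mapsto Lg$; the key input is an elementary integral bound $\int_Z |\Phi(L)+b|^{-\alpha'}\,dL \lesssim \delta^{-\alpha'}$ (a lemma from Hunt--Kaloshin / Sauer--Yorke) applied with $\Phi(L)=L(g(x)-g(y))$ and $b=f_0(x)-f_0(y)$. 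This sidesteps entirely the ``main obstacle'' you identify (arranging enough separating cutoffs at the critical scale to get a usable conditional density in $\R^N$), which your sketch does not resolve. Your infinite-dimensional probe approach is plausible in principle, but you would need to (a) verify that $\omega\mapsto\psi_\omega$ has compact image in $N^{1,p}(X;\R^N)$, and (b) prove the conditional density bound, both of which are nontrivial and are cleanly avoided by the finite-dimensional matrix perturbation used in the paper.
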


We now turn to the study of dimension increase under Sobolev mappings of generic leaves in parameterized families of subsets. Consider a continuous supercritical Sobolev mapping $f \colon X \to Y$ as in Theorem \ref{Kaufman metric intro} and fix a dimension $0\leq s \leq Q$ and a target dimension $s \leq \alpha\leq ps/(p-Q+s)$. If $X$ is foliated by subsets of dimension no greater than $s$, how many leaves of the foliation can be mapped by $f$ onto sets of dimension at least $\alpha$? The answer will be given in terms of Hausdorff measures on the parameterizing space for the foliation.

The preceding question has been thoroughly studied in Euclidean space. The first two authors together with Monti \cite{BMT} studied supercritical ($p>n$) and borderline ($p=n$) Sobolev mappings on foliations arising from the orthogonal projection of $\reals^n$ onto a subspace of arbitrary dimension. In the same setting, Hencl and Honz\'{i}k \cite{HenclHonzik} considered the sub-critical case ($p<n$). Bishop and Hakobyan \cite{Bishop} have recently addressed finer questions for the behavior of planar quasiconformal mappings along lines.  

In this general metric setting, we must first give precise meaning to the notion of foliation. David and Semmes \cite{RegularBetween} introduced a class of mappings between metric spaces that is analogous to the class of submersions between differentiable manifolds. In this paper, we study foliations arising from local versions of David--Semmes regular mappings.

\begin{definition}\label{regular} Let $s \geq 0$. A surjection  $\pi \colon X \to W$ between proper metric spaces is said to be \emph{locally David--Semmes $s$-regular} (for short, {\it locally $s$-regular}) if for every compact subset $K \subeq X$, $\pi|_K$ is Lipschitz and there is a constant $C \geq 1$ and a radius $r_0>0$ such that for every ball $B \subeq W$ of radius $r<r_0$, the truncated preimage $\pi \inv(B) \cap K$ can be covered by at most $Cr^{-s}$ balls in $X$ of radius $Cr$.
\end{definition}

An easy calculation shows that given a locally $s$-regular mapping $\pi \colon X \to W$, a compact subset $K\subeq X$, and a point $a \in W$,
$$\Hdim^s_X(\pi\inv(a) \cap K) \leq C,$$
where $C< \infty$ depends only on the constant associated to $K$ in Definition~\ref{regular}. In particular, the leaves $\pi\inv(a)$ have Hausdorff dimension no greater than $s$. However, leaves can have Hausdorff dimension strictly less than $s$; this situation occurs naturally for certain foliations of the Heisenberg group, as we will see later.

Let $\pi \colon X \to W$ be a locally $s$-regular mapping. The triple $(X,W,\pi)$ will be called an \emph{$s$-foliation} of $X$. If the value of $s$ is unimportant, we will refer to an $s$-foliation as a \emph{metric foliation}. Given a point $a \in W$, the set $\pi\inv(a)$ is called a \emph{leaf} of the foliation.

\begin{theorem}\label{foliation}
Let $Q\geq 1$ and $0<s<Q$. Let $(X,d_X,\mu)$ be a proper metric measure space that is locally $Q$-homogeneous, supports a local $Q$-Poincar\'e inequality, and is equipped with an $s$-foliation $(X,W,\pi)$. Let $Y$ be any metric space. For $p>Q$, if $f \colon X \to Y$ is a continuous mapping that has an upper gradient in $\Lloc^p(X)$, then
\begin{equation}\label{foliation-estimate}
\dim \{a \in W: \dim(f(\pi\inv(a)))\geq \alpha\} \leq (Q-s)-p\left(1-\frac{s}{\alpha}\right)
\end{equation}
for each $\alpha \in \left( s,\frac{ps}{p-Q+s}\right\rbrack$.
\end{theorem}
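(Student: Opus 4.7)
The plan is to combine the Kaufman-style covering estimate underlying Theorem~\ref{Kaufman metric intro} with a Frostman argument on $W$. Fix a compact $K \subeq X$ on which the doubling, Poincar\'e, and $s$-regularity constants are uniform, let $g \in \Lloc^p(X)$ be an upper gradient of $f$, and choose a slightly larger compact $K^* \subeq X$ containing every relevant enlarged ball. Set $\beta := (Q-s) - p(1 - s/\alpha)$. Since Hausdorff dimension is countably stable under compact exhaustion of $X$, it suffices to bound $\dim_W A_\alpha$, where $A_\alpha := \{a \in W : \dim_Y f(\pi\inv(a) \cap K) \geq \alpha\}$. Introducing the quantitative bad set
\[
 A(\alpha',\delta) := \{a \in W : \cH^{\alpha'}_{\infty}(f(\pi\inv(a) \cap K)) \geq \delta\},
\]
one has $A_\alpha \subeq \bigcap_{n}\bigcup_{m} A(\alpha - 1/n, 1/m)$, so by continuity of $\alpha \mapsto \beta(\alpha)$ it is enough to prove $\dim_W A(\alpha,\delta) \leq \beta$ for every admissible $\alpha$ and every $\delta > 0$.

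The technical core is a pointwise lower bound on the $L^p$-energy of $g$ near each bad leaf: there exists $c = c(K) > 0$ such that for every $a \in A(\alpha,\delta)$ and every sufficiently small $r > 0$,
\[
 \int_{\pi\inv(B_W(a, Cr)) \cap K^*} g^p\, d\mu \;\geq\; c\, \delta^{p/\alpha}\, r^{\beta}.
\]
I would prove this by running the Kaufman-type argument from Theorem~\ref{Kaufman metric intro} on the single leaf $\pi\inv(a) \cap K$. Apply local $s$-regularity to cover the tube $\pi\inv(B_W(a,r)) \cap K$ by at most $Cr^{-s}$ balls $B_X(x_k, Cr)$, performing a $1/5$-Vitali reduction so that doubling on $X$ gives bounded overlap of the enlargements $B_X(x_k, \Lambda r)$. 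The Morrey form of the Haj\l{}asz--Koskela embedding, valid for $p > Q$ in a $Q$-homogeneous PI space, gives $\diam f(B_X(x_k, Cr)) \leq Cr^{1-Q/p}(\int_{B_X(x_k, \Lambda r)} g^p\, d\mu)^{1/p}$. Since these balls cover $\pi\inv(a)\cap K$, the hypothesis $\cH^\alpha_\infty(f(\pi\inv(a)\cap K)) \geq \delta$ yields
\[
 \delta \;\leq\; C r^{\alpha(1-Q/p)} \sum_k \Bigl(\int_{B_X(x_k, \Lambda r)} g^p\, d\mu\Bigr)^{\alpha/p}.
\]
Jensen's inequality applied to the concave map $t \mapsto t^{\alpha/p}$ (legitimate since $\alpha \leq ps/(p-Q+s) < p$), together with the bound $n \leq Cr^{-s}$ on the number of summands and the bounded overlap of the enlargements, absorbs the sum into a single integral over the enlarged tube; rearranging and raising to the $(p/\alpha)$-th power produces the exponent $\beta = (p/\alpha)(-\alpha(1-Q/p) + s(1-\alpha/p))$.

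Given the pointwise lower bound, I would conclude by Frostman's lemma on $W$. Assume for contradiction that $\dim_W A(\alpha,\delta) > \beta$. Then there exist $\beta' \in (\beta, \dim_W A(\alpha,\delta))$ and a nonzero Radon measure $\nu$ supported on (a compact subset of) $A(\alpha,\delta)$ with $\nu(B_W(a, r)) \leq r^{\beta'}$ for all $a \in W$ and $r > 0$. Integrating the lower bound against $\nu$ and applying Fubini,
\[
 c\, \delta^{p/\alpha}\, r^{\beta}\, \nu(A(\alpha,\delta)) \;\leq\; \int_{K^*} g^p(x)\, \nu\bigl(B_W(\pi(x), Cr)\bigr)\, d\mu(x) \;\leq\; C\, r^{\beta'} \int_{K^*} g^p\, d\mu,
\]
and dividing by $r^{\beta}$ and letting $r \to 0^+$ forces $r^{\beta' - \beta} \to 0$ to dominate a positive constant, a contradiction.

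The main obstacle is establishing the pointwise lower bound with the sharp exponent $\beta$: this requires controlling, in parallel, the $s$-regular tube covering, the bounded overlap of the $\Lambda$-enlargements (via Vitali and $X$-doubling), the Morrey--Sobolev embedding in the $Q$-homogeneous PI setting, and the precise bookkeeping of $r$-exponents through Jensen. An attractive feature of the Frostman step is that it sidesteps any requirement that $W$ itself be doubling, which is not among the hypotheses.
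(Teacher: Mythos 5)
Your proposal is correct and, while it shares the same architecture as the paper's argument (Frostman measure on the parameter set $W$, tube covering from local $s$-regularity, Morrey's estimate), the way you assemble the pieces is genuinely different, and in a worthwhile way. The paper fixes a Frostman measure $m$ on $E \subeq E_\alpha$ with exponent $t$, covers $E$ by balls $B_W(a_i,r_i)$ at varying scales with small total content $\sum r_i^{t'} < \ep$ for a slightly larger exponent $t'$, applies the shrunken Morrey estimate (Proposition~\ref{better Morrey}, which is built on Lemma~\ref{shrink} precisely to obtain disjoint supports for the tube integrals after the $5B$ reduction), and extracts the bound $\int \cH^\alpha_\tau(f(\pi\inv(a)\cap K))\,dm(a) \lesssim \ep$ via two applications of H\"older, contradicting $E\subeq E_\alpha$. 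You instead prove a clean, single-leaf, single-scale energy lower bound $\int_{\pi\inv(B_W(a,Cr))\cap K^*}g^p\,d\mu \gtrsim \delta^{p/\alpha}r^\beta$ for every bad $a$, obtained from Jensen (rather than H\"older) on the $Cr^{-s}$-term sum, and then close the argument by a one-line Tonelli computation against the Frostman measure followed by $r\to 0^+$. Your route avoids Lemma~\ref{shrink} entirely: where the paper uses a maximal-function replacement of $g^p$ to shrink the integration ball to $\tfrac15 B$ and hence achieve disjointness, you simply absorb the $\sigma$-enlargements via bounded overlap of Vitali-disjoint balls, which requires only the local doubling that comes for free from $Q$-homogeneity. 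The exponent bookkeeping you outline is correct and produces exactly $\beta = (Q-s) - p(1-s/\alpha)$, and the concavity of $t\mapsto t^{\alpha/p}$ needed for Jensen holds since $\alpha \le ps/(p-Q+s) < p$.

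Two small points you should tighten when writing this up. First, Frostman's lemma on $W$ requires the set $A(\alpha,\delta)$ to be at least Suslin; you need an analogue of the paper's Lemma~\ref{union of compacts} showing that each superlevel set of $a\mapsto \cH^\alpha_\infty(f(\pi\inv(a)\cap K))$ is closed (this follows from continuity of $f$ and $\pi$ together with compactness of $\pi\inv(a)\cap K$, exactly as in the paper). Second, in the pointwise bound you should be explicit that the $5B$-reduced covering centers $x_k$ lie within $Cr$ of $K$ (hence inside a fixed $K^*$ for small $r$), that the $\sigma$-enlarged balls $B_X(x_k,\Lambda r)$ map under the locally Lipschitz $\pi$ into $B_W(a,C'r)$ and remain inside $K^*$, and that the Morrey constant, the homogeneity lower bound $\mu(B(x,r))\gtrsim r^Q$, and the $s$-regularity radius $r_0$ are all uniform on $K^*$. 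These are exactly the uniformity checks the paper performs on its compact set $K'$; none of them present an obstacle.
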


The metric setting presents several obstacles to a straightforward adaptation of the Euclidean proof given in \cite[Theorem~1.3]{BMT}. We make heavy use of the machinery of geometric measure theory in metric spaces espoused in \cite{Mattila}.

A motivating example of a metric measure space to which our results apply is the Heisenberg group $\Heis$. Of particular interest are foliations of $\Heis$ by either left or right cosets of a given horizontal subgroup. When the leaves are left cosets, they are also horizontal, and the natural parameterizing space is Euclidean.  However, when the leaves are right cosets, they are rarely horizontal, and the natural parameterizing space is the Grushin plane, a sub-Riemannian metric space homeomorphic but not bi-Lipschitz equivalent to $\reals^2$. In this case, the wide generality of Theorem \ref{foliation} is needed.  These and other examples are discussed in Section~\ref{examples section}.

Theorem \ref{foliation} recovers the Euclidean result in the case of orthogonal projections onto subspaces, and is sharp in that setting \cite[Theorem~1.4]{BMT}.  However, it is interesting to note that while the foliation of the Heisenberg group by left cosets of a horizontal subgroup is a $2$-foliation, the dimension of a leaf is only $1$. This prevents Theorem \ref{foliation} from being sharp, and indicates that the framework of David-Semmes foliations is not appropriate. In the article \cite{DimDistHeis}, we provide an alternate framework, based on the Radon-Nikodym theorem, which is more appropriate. Notably, this alternate framework cannot accommodate foliations that are not parameterized by a Euclidean space, such as the foliation of the Heisenberg group by right cosets of a horizontal subgroup.

We now give an outline for this paper. In Section \ref{notation-section}, we establish notation and recall relevant definitions from the theory of analysis on metric spaces. Section \ref{Morrey section} describes the version of Morrey's inequality, a key tool in our proofs, that is valid in the metric measure space setting. We prove Theorems \ref{Kaufman metric intro} and \ref{universal-sharp}  in Section~\ref{Universal section}. Section~\ref{reg foliations section} contains the proof of Theorem \ref{foliation}.
In Section \ref{examples section} we provide examples of metric foliations and discuss the applications of our results. The final Section \ref{questions section} contains some open questions and problems motivated by this work.

\

\paragraph{\bf Acknowledgements.} Research for this paper was conducted during a visit of the third author to the Department of Mathematics of the University of Illinois at Urbana-Champaign in January 2012, and during a visit of the second author to the Institute of Mathematics at the University of Bern in June 2012. We would like to thank these institutions for their hospitality. 

\section{The metric measure space setting}\label{notation-section}

In a metric space $(X,d)$, we denote the open ball centered at a point $x \in X$ of radius $r>0$ by
$$B_X(x,r)=\{y \in X: d(x,y)<r\}$$
and the corresponding closed ball by
$$\ovl{B}_X(x,r) = \{y \in X: d(x,y) \leq r\}.$$
When there is no danger of confusion, we often write $B(x,r)$ in place of $B_X(x,r)$.  A similar convention will be used for all objects that depend implicitly on the ambient space. For a subset $A$ of $X$ and a number $\ep>0$, we denote the $\ep$-neighborhood of $A$ by
$$\nbhd(A,\ep) = \{x \in X : \dist(A,x) < \ep\}.$$
For an open ball $B=B(x,r)$ and a parameter $\lambda > 0$, we set $\lambda B =B(x,\lambda r)$.

A metric space is \emph{proper} if every closed ball is compact. We will only consider proper metric spaces in this paper.

A \emph{metric measure space} is a triple $(X,d,\mu)$ where $(X,d)$ is a metric space and $\mu$ is a measure on $X$. The measure $\mu$ is assumed to be a Borel measure that gives positive and finite value to any non-empty open set. For $E \subeq X$, we denote by $\mu\lfloor E$ the restriction of $\mu$ to $E$. Given a mapping $f$ from $X$ to some other metric space $Y$, we define the push-forward measure of $\mu$ by $f$ as 
$$f_\sharp\mu(U) = \mu(f\inv(U)),$$
where $U \subeq Y$. 

For $t \geq 0$, the $t$-dimensional Hausdorff measure on a metric space $X$ will be denoted by $\Hdim^t_X$ or simply $\Hdim^t$.  For $\ep>0$, the corresponding pre-measure will be denoted by $\Hdim^{t}_{\ep,X}$ or $\Hdim^t_\ep$, and the corresponding content will be denoted by $\Hdim^t_{\infty,X}$ or $\Hdim^t_{\infty}$. Unless otherwise noted, for $E \subeq X$ we denote by $\dim E$ the Hausdorff dimension of the metric space $(E,d_X)$. We refer to Mattila \cite{Mattila} for more details and information about geometric measure theory in metric spaces.

Let $Q > 0$. We say that the metric measure space $(X,d,\mu)$ is \emph{locally $Q$-homogeneous} if for every compact subset $K \subeq X$, there is a radius $R > 0$ and a constant $C \geq 1$ such that $$\frac{\mu(B(x,r_2))}{r_2^Q} \le C \frac{\mu(B(x,r_1))}{r_1^Q}$$
whenever $B(x,r_1) \subeq B(x,r_2)$ are concentric balls centered in $K$. When the value of $Q$ is unimportant, we say that $(X,d,\mu)$ is \emph{locally homogeneous}.

Any locally $Q$-homogeneous space has Hausdorff dimension at most $Q$. In fact, such spaces have Assouad dimension at most $Q$; note that the Assouad dimension is always greater than or equal to the Hausdorff dimension. We will not make use of Assouad dimension in this paper.

Every locally homogeneous metric measure space $(X,d,\mu)$ is \emph{locally doubling}, which means that for every compact subset $K \subeq X$, there is a radius $R > 0$ and a constant $C \geq 1$ such that
$$
\mu(B(x,2r)) \le C \mu(B(x,r))
$$
whenever $B(x,r)$ is a ball centered in $K$ with $r\le R$.

The local homogeneity condition only provides lower bounds on measure. We will occasionally require upper bounds as well. The metric measure space $(X,d,\mu)$ is \emph{locally Ahlfors $Q$-regular} if for every compact subset $K \subeq X$, there is a radius $R > 0$ and a constant $C \geq 1$ such that
$$\frac{r^Q}{C} \leq \mu(B_r) \leq C r^Q$$
whenever $B_r$ is a ball centered in $K$ of radius $r<R$.

We will often consider conditions on spaces and mappings defined using multiplicative constants. When estimating quantities involving such constants, we use the notation $A \lesssim B$ to mean that there is a constant $C \geq 1$, depending only on certain specified and fixed quantities, such that $A \leq C B$.

\section{Sobolev classes and Morrey's estimate}\label{Morrey section}

Let $p>n$, and let $m \in \nats$.  Each mapping in the supercritical Sobolev space ${\rm W}_{\loc}^{1,p}(\reals^n;\reals^m)$, $p>n$, has a $(1-\frac{n}{p})$-H\"older continuous representative satisfying \emph{Morrey's estimate}
\begin{equation}\label{SobEmbed intro} \diam f(Q) \leq c(n,p) \diam Q \left(\mint_Q |D f|^p \ d\Hdim^n\right)^{\frac{1}{p}},
\end{equation}
for each ball or cube $Q \subeq \reals^n$; see, e.g., \cite{Ziemer}. Here $|Df|$ denotes the norm of the matrix of weak partial derivatives of the coordinate functions of $f$, and $c(n,p)$ is a positive constant depending only on $n$ and $p$. This fact is the sole property of Sobolev mappings needed for the results in this paper. Note that by H\"older's inequality, $${\rm W}_{\loc}^{1,p}(\reals^n;\reals^m) \subeq {\rm W}_{\loc}^{1,q}(\reals^n;\reals^m)$$ for all $1 \leq q < p$. Hence, if $f \in {\rm W}_{\loc}^{1,p}(\reals^n;\reals^m)$, the inequality \eqref{SobEmbed intro} also holds with $p$ replaced by any exponent $q \in (n,p]$.

We wish to state a version of the Morrey inequality in the metric measure space setting. Throughout this section, we assume that $(X,d,\mu)$ is a proper metric measure space and that $(Y,d_Y)$ is an arbitrary metric space.

A robust approach to Sobolev spaces of mappings between metric spaces is based on the concept of an upper gradient \cite{Cheeger}, \cite{Acta}, \cite{Nages}. Let $f \colon X \to Y$ be a continuous map, and let $g \colon Y \to [0,\infty]$ be a Borel function.
The function $g$ is an \emph{upper gradient} of $f$ if for every rectifiable curve $\gamma \colon [0,1] \to X$,
$$d_Y(f(\gamma(0)),f(\gamma(1))) \leq \int_\gamma g \ ds.$$
We consider mappings $f$ which have upper gradients in $\Lloc^p(X)$. Such mappings are absolutely continuous on ``most" rectifiable curves in $X$ \cite[Proposition 3.1]{Nages}. When the target space $Y$ is a Banach space, one can define a Banach space of Newtonian-Sobolev mappings ${\rm N}^{1,p}(X;Y)$ consisting of equivalence classes of (not necessarily continuous) mappings in ${\rm L}^p(X;Y)$ with an upper gradient in ${\rm L}^p(X)$. For more details, see \cite{Nages} or \cite{HKST}.

If there are no rectifiable curves in $X$, then any mapping $f \colon X \to Y$ has the zero function as an upper gradient, and so there is no hope for a Morrey estimate.  The $Q$-Poincar\'e inequality remedies this \cite{SobMet}, \cite{Acta}, \cite{LAMS}.

\begin{definition}
Let $p\ge 1$. A metric measure space $(X,d,\mu)$ satisfies a \emph{local $p$-Poincar\'e inequality} if for every compact subset $K \subeq  X$, there are constants $C \geq 1$, $\sigma \geq 1$, and $R>0$ such that if $f \colon X \to \reals$ is a continuous function and $g \colon X \to [0,\infty]$ is an upper gradient of $f$, then
$$\mint_{B}|f-f_B| \ d\mu \leq C \diam B \left(\mint_{\sigma B} g^p \ d\mu\right)^{1/p}$$
for each open ball $B \subeq X$ centered in $K$ of radius less than $R$.
\end{definition}
%
%Although the Poincar\'e inequality stated here is a condition on real-valued measurable functions, it implies a similar condition on continuous mappings from $X$ to $Y$ \cite[Section 4]{HKST}. This is enough to guarantee a Morrey estimate \cite[Theorem 5.1]{SobMet}, \cite[Theorem 6.2]{HKST}.

\begin{theorem} [Haj{\l}asz-Koskela, Heinonen et al.]\label{PoinGivesSob}
Assume that $X$ is locally $Q$-homogeneous, $Q \geq 1$, and supports a local $Q$-Poincar\'e inequality. If $f \colon X \to Y$ is a continuous mapping with an upper gradient $g \in \Lloc^p(X)$ for some $p>Q$, then for each compact subset $K \subeq X$ there exist constants $C \geq 1$, $\sigma \geq 1$, and $R>0$ satisfying
$$\diam f(B) \leq C (\diam B) \left(\mint_{\sigma B} g^p \ d\mu \right)^{\frac{1}{p}}$$
for each open ball $B \subeq X$ centered at a point of $K$ of radius less than $R$.
\end{theorem}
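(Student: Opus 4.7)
The plan is to reduce to a scalar target and then run a telescoping Poincar\'e chain argument. Fix $y \in B$ and set $u(z) := d_Y(f(z), f(y))$; this function is $1$-Lipschitz in the $Y$-metric, so $g$ is an upper gradient of $u$ as well. Since $u(y) = 0$ and $u(x) = d_Y(f(x), f(y))$, the desired estimate on $\diam f(B)$ will follow if I bound $u(x)$ uniformly for $x \in B$ by the quantity on the right-hand side.

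The first step is organizational: pick a compact $K' \supseteq K$ (a small closed neighborhood of $K$) and extract from the hypotheses uniform local doubling constants and Poincar\'e constants $C_P, \sigma_P$ valid on balls centered in $K'$. Then choose the radius threshold $R$ small enough and a global dilation factor $\sigma$ large enough that, for any ball $B = B(x_0, r)$ centered in $K$ with $r < R$, every ball appearing in the chain construction below sits inside $\sigma B \subseteq K'$ and has radius within the domain of validity of the Poincar\'e inequality.

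Next, for each $x \in B$, form the sequence of nested balls $B_i^x := B(x, 2^{-i+1} r)$, with $B_0^x \supseteq B$ and $\bigcap_i B_i^x = \{x\}$. Continuity of $u$ together with $|u_{B_i^x} - u(x)| \leq \osc_{B_i^x} u$ gives $u_{B_i^x} \to u(x)$. Apply the local $Q$-Poincar\'e inequality on each $B_i^x$ and use doubling to pass from the average over $B_{i+1}^x$ to the average over $B_i^x$, followed by H\"older's inequality (permissible because $p > Q$):
\begin{equation*}
| u_{B_{i+1}^x} - u_{B_i^x} | \lesssim 2^{-i} r \left( \mint_{\sigma_P B_i^x} g^p \, d\mu \right)^{1/p}.
\end{equation*}
Local $Q$-homogeneity then furnishes $\mu(\sigma B) / \mu(\sigma_P B_i^x) \lesssim 2^{iQ}$, so each term is bounded by $2^{-i(1 - Q/p)} r \bigl( \mint_{\sigma B} g^p \, d\mu \bigr)^{1/p}$. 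Summing the resulting geometric series, which converges exactly because $p > Q$, and telescoping produces
\begin{equation*}
u(x) \lesssim r \left( \mint_{\sigma B} g^p \, d\mu \right)^{1/p} \lesssim \diam B \left( \mint_{\sigma B} g^p \, d\mu \right)^{1/p},
\end{equation*}
where the last step uses the fact that connectivity imposed through the Poincar\'e inequality forces $r \lesssim \diam B$. Taking the supremum over $x, y \in B$ finishes the proof.

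I expect the main challenge to be organizational rather than analytic: one must carefully coordinate the enlargement $K'$, the dilation factor $\sigma$, and the threshold $R$ so that the local Poincar\'e and doubling hypotheses apply uniformly to every ball in the chain construction, and so that the constants $C, \sigma, R$ ultimately depend only on $K$, $Q$, and $p$. The analytic heart of the proof---convergence of the geometric series---rests squarely on the supercritical assumption $p > Q$, without which the telescoping argument fails.
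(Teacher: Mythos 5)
This theorem is not proved in the paper at all: it is quoted from the literature (Haj{\l}asz--Koskela, Heinonen et al.), and the route you take --- reduce to the scalar function $u(\cdot)=d_Y(f(\cdot),f(y))$, which still has $g$ as an upper gradient, then run a telescoping chain of Poincar\'e inequalities on balls shrinking to $x$, with $p>Q$ giving convergence of the geometric series and local $Q$-homogeneity giving the factor $2^{iQ/p}$ --- is exactly the standard argument behind the cited result, and your bookkeeping of $K'$, $\sigma$, $R$ is the right way to localize it.

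There is, however, one concrete gap at the last step. Your chain $B_i^x=B(x,2^{-i+1}r)$ is centered at $x$ only, so the telescoping identity controls $|u(x)-u_{B_0^x}|$, not $u(x)$: you still owe a bound on the initial average $u_{B_0^x}=u_{B(x,2r)}$, and this does not follow from the single value $u(y)=0$ (an average of a nonnegative function can be large even though it vanishes at one point). The standard repair is to prove the pointwise estimate $|u(z)-u_B|\lesssim r\left(\mint_{\sigma B} g^p\,d\mu\right)^{1/p}$ for every $z\in B$: chain at $z$ exactly as you do, and add one comparison $|u_{B_0^z}-u_B|\le \frac{\mu(B_0^z)}{\mu(B)}\,\mint_{B_0^z}|u-u_{B_0^z}|\,d\mu$, which local doubling and the Poincar\'e inequality on $B_0^z$ control; then $d_Y(f(x),f(y))=|u(x)-u(y)|\le |u(x)-u_B|+|u(y)-u_B|$ finishes the proof. (Alternatively, since $u\ge 0$, run a second chain at $y$ starting from $B(y,4r)\supseteq B_0^x$ and use $u_{B_0^x}\lesssim u_{B(y,4r)}$ by doubling.) A more minor point: the inequality $r\lesssim \diam B$ is asserted by appeal to ``connectivity''; it does hold at small scales because doubling plus a Poincar\'e inequality yields local quasiconvexity, so $\diam B(x_0,r)\gtrsim r$ unless the ball exhausts its component, a degenerate case one removes by shrinking $R$ --- but as written this needs a sentence of justification rather than a passing remark.
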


In the above result, it is assumed \emph{a priori} that the mapping $f$ is continuous. In fact, one could instead assume only that the mapping $f$ is locally integrable in a suitable sense; it then follows that $f$ has a continuous representative satisfying the desired conclusion.

It may occur that the quantity $\sigma$ in Theorem \ref{PoinGivesSob} is necessarily strictly larger than one \cite[Section 9]{SobMet}. This is an inconvenience when working with coverings. In many situations the following statement, which we learned from Koskela and Z\"urcher, ameliorates this problem.

\begin{lemma}\label{shrink}
Assume that $(X,d,\mu)$ is a locally doubling metric measure space and let $1\leq q<p$ and $0<\tau\leq 1$. For each $g \in \Lloc^{p}(X)$, there is a Borel function $\til{g} \in \Lloc^{p/q}(X)\subeq \Lloc^1(X)$ such that for each compact set $K\subseteq X$ there exists a constant $C\ge 1$ and a radius $R>0$ so that
$$\mint_{B(x,r)}g^{q}\ d\mu \leq C \mint_{B(x,\tau r)} \til{g} \ d\mu$$
for each $x \in K$ and each $0<r<R$.
\end{lemma}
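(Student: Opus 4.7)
The natural candidate for $\tilde g$ is (a localized version of) the Hardy--Littlewood maximal function of $g^q$. Specifically, for a fixed compact set $K$, choose an auxiliary compact set $K'$ containing a neighborhood of $K$ on which the doubling condition holds up to some radius $R_0$, and set
$$\tilde g(y) = \sup_{0<\rho\le R_0} \mint_{B(y,\rho)} g^{q}\,d\mu\quad\text{for }y\in K',$$
with $\tilde g$ extended to $X$ in any measurable way (e.g.\ by zero outside $K'$, or piecing together such constructions along a compact exhaustion of $X$ to obtain a single globally defined Borel function). The plan is then to verify the pointwise averaged inequality from the containment of balls plus doubling, and the integrability from the metric Hardy--Littlewood maximal theorem.

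For the inequality, I would fix $x\in K$ and $0<r<R$ with $R$ chosen small enough that $(1+\tau)r<R_0$ and all relevant balls lie in $K'$. For every $y\in B(x,\tau r)$, the triangle inequality gives $B(x,r)\subseteq B(y,(1+\tau)r)$, and local doubling applied $\lceil\log_2((1+\tau)/\tau)\rceil+1$ times yields
$$\mu\bigl(B(y,(1+\tau)r)\bigr)\le C_1\,\mu\bigl(B(x,r)\bigr),$$
where $C_1$ depends only on $\tau$ and the doubling constant on $K'$. Hence
$$\mint_{B(x,r)} g^{q}\,d\mu \;\le\; C_1 \mint_{B(y,(1+\tau)r)} g^{q}\,d\mu \;\le\; C_1\,\tilde g(y).$$
Averaging this pointwise bound over $y\in B(x,\tau r)$ gives the desired estimate with $C=C_1$.

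For the integrability, note that $p/q>1$ and that, in the locally doubling setting, the standard covering argument shows the localized maximal operator $g^q\mapsto \tilde g$ is of strong type $(p/q,p/q)$ when tested against functions supported in $K'$, with operator norm depending only on $p/q$ and the doubling constant. Since $g\in L^{p}_{\loc}(X)$, we have $g^q\in L^{p/q}_{\loc}(X)$, so $\tilde g\in L^{p/q}_{\loc}(X)\subseteq L^{1}_{\loc}(X)$ by H\"older. The only mild subtlety, and the step I would be most careful with, is making sure a \emph{single} globally defined Borel function $\tilde g$ works for every compact $K\subseteq X$ simultaneously: this is handled by exhausting $X$ by an increasing sequence of compact sets, building the localized maximal function at each scale, and checking that taking the pointwise supremum (or gluing via a partition into annuli) preserves both measurability and the two required estimates, at the cost of enlarging the constants $C$ and shrinking the radii $R$ on each compact piece.
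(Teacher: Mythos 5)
Your proposal is correct and follows essentially the same route as the paper: both take $\tilde g$ to be a restricted Hardy--Littlewood maximal function of $g^q$, use the containment $B(x,r)\subseteq B(y,(1+\tau)r)$ for $y\in B(x,\tau r)$ together with local doubling, and then average (or equivalently integrate) over $y\in B(x,\tau r)$ to pass from the pointwise bound to the integral inequality, with $\Lloc^{p/q}$-integrability coming from the metric maximal theorem. Your extra remarks about gluing the localized maximal functions along a compact exhaustion make explicit what the paper's phrase ``suitably restricted maximal function'' leaves implicit, and your exact count of doubling applications is overly conservative but harmless; neither changes the essence of the argument.
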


\begin{proof}
Let $y \in B(x,\tau r)$.  Then
$$\int_{B(x,r)}g^{q} \ d\mu \leq \mu(B(y,(1+\tau)r))M(g^{q})(y),$$
where $M(g^{q}) \in \Lloc^{p/q}(X)$ is a suitably restricted maximal function of $g^{q}$ \cite[Chapter 2]{LAMS}.  Integrating the above inequality over $B(x,\tau r)$ yields
\begin{align*}\mu(B(x,\tau r)) \int_{B(x,r)}g^{q} \ d\mu
& \leq \int_{B(x,\tau r)} \mu(B(y,(1+\tau)r))M(g^{q})(y) \ d\mu(y) \\
& \leq \mu(B(x, (1+2\tau)r)) \int_{B(x,\tau r)} M(g^{q})(y). \end{align*}
The local doubling condition now implies that $\til{g}=M(g^{q})$ satisfies the requirements of the statement.
\end{proof}

H\"older's inequality, Theorem \ref{PoinGivesSob},  and Lemma \ref{shrink} imply the following statement, which will be the form of Morrey's estimate most frequently applied in this paper.

\begin{proposition}\label{better Morrey}
Assume that $X$ is locally $Q$-homogeneous, $Q \geq 1$, and supports a local $Q$-Poincar\'e inequality. Let $Q<q<p$. If $f \colon X \to Y$ is a continuous mapping with an upper gradient in $\Lloc^p(X)$, then there exists a Borel function $g \in \Lloc^{p/q}(X)\subeq \Lloc^1(X)$ such that for each compact set $K \subeq X$, there exists a constant $C \geq 1$ and a radius $R>0$ such that
$$\diam f(B) \leq C \diam B \left(\mint_{B(x,r/5)} g\ d\mu\right)^{1/q},$$
for each $x \in K$ and $0<r<R$.
\end{proposition}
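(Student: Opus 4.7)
The plan is to combine the three ingredients named in the paper—Theorem~\ref{PoinGivesSob}, Lemma~\ref{shrink}, and H\"older's inequality—in a short sequence. Fix a compact set $K \subeq X$ and let $g_0 \in \Lloc^p(X)$ be the given upper gradient of $f$. Since $q < p$ and $\mu$ is locally finite, H\"older's inequality yields $g_0 \in \Lloc^q(X)$. Moreover, because $q > Q$ and $g_0$ is an upper gradient of the continuous map $f$, Theorem~\ref{PoinGivesSob} applied with exponent $q$ produces constants $C_1 \geq 1$, $\sigma \geq 1$, and $R_1 > 0$ (depending on $K$ and $q$) such that
\begin{equation*}
\diam f(B(x,r)) \leq C_1 \diam B(x,r) \left(\mint_{B(x,\sigma r)} g_0^q \, d\mu\right)^{1/q}
\end{equation*}
for every $x \in K$ and $0 < r < R_1$. (Strictly speaking, one should also enlarge $K$ to a compact set that contains the dilated balls under consideration, but this is a routine local adjustment.)

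Next, I would eliminate the dilation factor $\sigma$ by invoking Lemma~\ref{shrink} with the chosen exponents $q < p$ and the parameter $\tau := 1/(5\sigma) \in (0,1]$. This produces a Borel function $\til{g} \in \Lloc^{p/q}(X) \subeq \Lloc^1(X)$ such that for some constant $C_2 \geq 1$ and radius $R_2 > 0$,
\begin{equation*}
\mint_{B(x,\sigma r)} g_0^{q} \, d\mu \leq C_2 \mint_{B(x,\tau \sigma r)} \til{g} \, d\mu = C_2 \mint_{B(x,r/5)} \til{g} \, d\mu
\end{equation*}
for every $x \in K$ and every $0 < r < R_2/\sigma$. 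Note that $\til{g}$ depends only on $g_0$, $p$, $q$, and $\tau$—hence on $f$, $p$, $q$, and $\sigma$—and therefore can be chosen once and for all independently of $K$ (the constant $C_2$ and radius $R_2$ are the ones that depend on $K$).

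Combining the two displays with $C := C_1 C_2^{1/q}$ and $R := \min(R_1, R_2/\sigma)$ gives the desired inequality
\begin{equation*}
\diam f(B(x,r)) \leq C \diam B(x,r) \left(\mint_{B(x,r/5)} \til{g} \, d\mu\right)^{1/q}
\end{equation*}
for all $x \in K$ and $0 < r < R$, with $g := \til{g}$. The only genuine issue is bookkeeping: one must ensure that the compact set $K$ used in Lemma~\ref{shrink} is large enough so that the balls $B(x,\sigma r)$ arising in step one are admissible there, and that the various local constants are coupled consistently. These are routine adjustments and present no essential obstacle; the substantive work has already been done in Theorem~\ref{PoinGivesSob} and Lemma~\ref{shrink}.
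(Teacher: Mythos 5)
Your proposal is correct and matches the paper's (one-line) indication exactly: apply H\"older to pass $g_0 \in \Lloc^p$ to $\Lloc^q$, invoke Theorem~\ref{PoinGivesSob} with exponent $q$, and then Lemma~\ref{shrink} with $\tau = 1/(5\sigma)$ to move from $\sigma B$ to $(1/5)B$. One small slip in your final remark: you say $\til{g}$ ``depends on $\tau$---hence on $\sigma$---and therefore can be chosen once and for all independently of $K$,'' but $\sigma$ in Theorem~\ref{PoinGivesSob} \emph{does} depend on $K$, so this reasoning is circular; the correct justification is visible from the proof of Lemma~\ref{shrink}, which produces $\til{g} = M(g_0^q)$, a (restricted) maximal function that in fact does not depend on $\tau$ at all, only on $g_0$ and $q$---so it really is a single function serving every compact $K$.
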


\section{Universal bounds on dimension distortion}\label{Universal section}

In this section, we prove Theorems \ref{Kaufman metric intro} and \ref{universal-sharp}.

%The proof of Theorem \ref{Kaufman-intro}, which provides universal bounds on dimension distortion under Sobolev mappings on Euclidean domains, can be summarized as follows. We consider an arbitrary subset $E$ of the domain with finite $\Hdim^s$ measure. We then find a covering of $E$ by arbitrarily small essentially disjoint dyadic cubes so that the sum of the side lengths raised to the dimension of $E$ is finite. In Euclidean space, the Morrey estimate  applies to cubes, and so we may estimate the diameter of the image of each cube. This gives rise to an estimate on the $\frac{ps}{p-Q+s}$-dimensional measure of $f(E)$ via H\"older's inequality and the essential disjointness of the dyadic cubes.
%
%The main difficulty in adapting this argument to the metric setting is the absence of essentially disjoint coverings by sets on which the Morrey estimate applies. It is not clear if the Morrey estimate applies to the Christ cubes, which are the usual substitute for a dyadic cube structure in a metric measure space \cite{Christ}.  We overcome this difficulty by employing Proposition \ref{better Morrey} and a covering theorem. The price to pay is that we only obtain the dimension estimate \eqref{Kaufman-estimate-intro} rather than the vanishing of the appropriate Hausdorff measure as in Theorem \ref{Kaufman-intro}. Theorem~ \ref{Q-regular} remedies this when $E$ is assumed to be locally Ahlfors regular.

\begin{proof}[Proof of Theorem \ref{Kaufman metric intro}]
We first consider the case $\dim E < Q$.  Fix $t \in (\dim E, Q)$, and choose $q \in (Q,p)$ so close to $p$ that
$$\frac{p \dim E}{p-Q+\dim E}< \frac{q t}{q-Q+t} < \frac{pt}{p-Q+t}.$$
Let $\alpha = \tfrac{qt}{q-Q+t} \in (0,Q)$.

The countable subadditivity of Hausdorff measure allows us to assume that $E$ is contained in a ball $B_0$, which has compact closure.  Hence, by Proposition \ref{better Morrey}, there is a constant $C \geq 1$, a radius $R>0$, and a Borel function $g \in \Lloc^{\frac{p}{q}}(X)$ such that for every $x \in E$ and $r<R$,
\begin{equation}\label{Morrey 2}\diam f(B(x,r)) \leq Cr \left(\mint_{B(x,r/5)} g\right)^{1/q}.
\end{equation}

Let $\ep, \ep'>0$. Since $t>\dim E$, it holds that $\Hdim^t(E)=0$.  Hence, it follows from the definitions and the $5B$-covering theorem \cite[Theorem 1.2]{LAMS} that there is a collection $\{B(x_k, r_k)\}_{k \in \nats}$ of balls centered in $E$ such that
\begin{itemize}
\item $\sum_{k \in \nats} r_k^t < \ep$,
\item $ \sup_{k \in \nats} r_k< \ep'$,
\item $E \subeq \bigcup_{k \in \nats} B(x_k,r_k) \subeq B_0,$
\item $B(x_k,r_k/5) \cap B(x_j,r_j/5) = \emptyset$ if $j\neq k$.
\end{itemize}
Since $f$ is uniformly continuous on small sets, choosing $\ep'$ small enough ensures that for all $k \in \nats,$
$$\diam f(B(x_k,r_k)) < \ep.$$
Reducing $\ep'$ to be less than $R$ if necessary, and using \eqref{Morrey 2} and local homogeneity, we see that
\begin{align*}
\Hdim^{\alpha}_{\ep}(f(E))  & \leq \sum_{k \in \nats} (\diam f(B(x_k,r_k)))^\alpha \\
& \lesssim \sum_{k \in \nats}r_k^{\left(1-\frac{Q}{q}\right)\alpha}\left(\int_{B(x_k,r_k/5)} g\ d\mu\right)^{\frac{\alpha}{q}}.
\end{align*}
Applications of H\"older's inequality and the disjointness assumption now yield
\begin{align*}\Hdim^{\alpha}_{\ep}(f(E)) &\lesssim \left( \sum_{k \in \nats}r_k^{t}\right)^{1-\frac{\alpha}{q}}\left(\int_{B_0}g\ d\mu\right) ^{\frac{\alpha}{q}} \\
& \lesssim \ep^{1-\frac{\alpha}{q}}\left(\int_{B_0}g\ d\mu\right) ^{\frac{\alpha}{q}}.\end{align*}
Since $g \in \Lloc^{p/q}(X) \subset \Lloc^1(X)$, letting $\ep$ tend to zero shows that $\dim f(E)\leq \alpha$. Letting $t$ tend to $\dim E$ now yields the desired result.

We now consider the case $\dim E = Q$. Choose any $q \in (Q,p)$.  As before, we use Proposition \ref{better Morrey} to find a constant $C \geq 1$, a radius $R>0$, and a Borel function $g \in \Lloc^{\frac{p}{q}}(X)$ such that
$$\diam f(B(x,r)) \leq C r \left(\mint_{B(x,r/5)} g \ d\mu\right)^{1/q}$$
for every $x \in E$ and $r<R$. This implies that
$$\diam f(B(x,r)) \leq C r (M(g)(x))^{1/q}.$$
Since $M(g) \in \Lloc^{\frac{p}{q}}(X)$, there is a sequence $E_1 \supeq E_2 \supeq \hdots$ of subsets of $E$ such that for each $n \in \nats$, there is a number $L_n \geq 1$ such that
$$\diam f(B(x,r)) \leq L_n r$$
for each $x \in E \bslash E_n$ and $r<R$, and $\mu(E_n) \leq 1/n$.  Then $N = \bigcap_n E_n$ satisfies $\mu(N)=0$, and
$$\dim f\left(E\bslash N \right) \leq \dim E=Q.$$
Note that local $Q$-homogeneity implies that $\Hdim^Q\lfloor E$ is absolutely continuous with respect to $\mu$. Hence $\Hdim^Q(N)=0$. It suffices to show that $\Hdim^Q(f(N))=0$ as well; the proof of this is analogous to the proof in the previous case and is left to the reader. This also proves the final statement of the Theorem. 
\end{proof}

In order to reach the stronger conclusion that the image of a given set has zero measure in the appropriate dimension, we assume the set has additional structure:

\begin{theorem}\label{Q-regular}Assume the notation and hypotheses of Theorem \ref{Kaufman metric intro}, and further assume that $\mu(E) = 0$ and there is $0 \leq t < Q$ such that $\Hdim^t\lfloor{E}$ is Ahlfors $t$-regular. Then $$\Hdim^{\frac{pt}{p-Q+t}}(f(E))=0.$$
\end{theorem}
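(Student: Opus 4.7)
The plan is to follow the scheme of the proof of Theorem~\ref{Kaufman metric intro} with two crucial modifications. First, I invoke Morrey's inequality at the critical exponent $q=p$ via Theorem~\ref{PoinGivesSob} rather than at a sub-critical exponent via Proposition~\ref{better Morrey}; this is what permits the sharper conclusion at the exact dimension $\alpha=pt/(p-Q+t)$. Second, I cover $E$ by balls of a single common radius $r$, so that the $\sigma$-expansions appearing in Theorem~\ref{PoinGivesSob} enjoy bounded overlap depending only on the local doubling constant.

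By countable subadditivity of $\Hdim^\alpha$, I may assume $E$ is bounded; fix a compact neighborhood $K$ of $E$. Let $g\in \Lloc^p(X)$ be an upper gradient of $f$. Given $\eta>0$, the hypothesis $\mu(E)=0$ together with absolute continuity of the integral of $g^p$ with respect to $\mu$ yields an open set $U\supeq E$ with $\ovl{U}\subeq K$ and $\int_U g^p\,d\mu<\eta$. For $r>0$ small, apply the $5B$-covering theorem to obtain a cover $\{B(x_k,r)\}_k$ of $E$ with $x_k\in E$, the balls $B(x_k,r/5)$ pairwise disjoint, and all of the expansions $B(x_k,\sigma r)$ contained in $U$. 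Ahlfors $t$-regularity of $\Hdim^t\lfloor E$, applied to the disjoint balls $B(x_k,r/5)$ centered in $E$, yields the cardinality bound $\#\{k\} \lesssim \Hdim^t(E) r^{-t}$. Since the centers $x_k$ have pairwise distance at least $2r/5$ and each expanded ball has the same radius $\sigma r$, local doubling bounds the multiplicity of the collection $\{B(x_k,\sigma r)\}_k$ by a constant $N$.

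Theorem~\ref{PoinGivesSob} together with local $Q$-homogeneity then gives
\[
\diam f(B(x_k,r)) \lesssim r^{1-Q/p}\left(\int_{B(x_k,\sigma r)} g^p\,d\mu\right)^{1/p}.
\]
Raising to the power $\alpha=pt/(p-Q+t)$, summing over $k$, and applying H\"older's inequality with conjugate exponents $p/(p-\alpha)$ and $p/\alpha$ (which are valid since $t<Q$ forces $\alpha<Q<p$) yields
\[
\sum_k (\diam f(B(x_k,r)))^\alpha \lesssim r^{\alpha(1-Q/p)-t(p-\alpha)/p}\,\Hdim^t(E)^{(p-\alpha)/p}\,(N\eta)^{\alpha/p}.
\]
A short calculation from the definition of $\alpha$ gives $\alpha(1-Q/p)=t(p-\alpha)/p=t(p-Q)/(p-Q+t)$, so the exponent of $r$ vanishes and the bound is independent of the chosen scale. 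Uniform continuity of $f$ on $K$ ensures that $\sup_k \diam f(B(x_k,r)) \to 0$ as $r\to 0$, and the estimate above therefore bounds $\Hdim^\alpha_\delta(f(E))$ for every $\delta>0$. Sending $\eta \to 0$ then gives $\Hdim^\alpha(f(E))=0$.

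The main obstacle is the scale-cancellation identity, which both pins down the target dimension $\alpha$ and forces the argument to be performed at the critical Morrey exponent $p$. Both new hypotheses are essential to making this possible: Ahlfors $t$-regularity delivers the cardinality bound that turns the single-scale cover into a useful tool, while $\mu(E)=0$ is what makes $\int_U g^p\,d\mu$ arbitrarily small and thereby drives the final estimate to zero.
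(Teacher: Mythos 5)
Your argument is correct and is essentially the paper's own route: the single-scale cover with cardinality $\lesssim \Hdim^t(E)\,r^{-t}$ and bounded overlap of the $\sigma$-dilated balls is exactly the $\sigma$-even coverability of Definition \ref{evenly coverable} that the paper extracts from Ahlfors regularity in Proposition \ref{Q reg cover}, and your Morrey-at-exponent-$p$/H\"older computation with $\mu(E)=0$ driving $\int_U g^p\,d\mu$ to zero is the ``simple modification of Kaufman's proof'' that the paper leaves to the reader. The only difference is that you write out that analytic half explicitly rather than citing Kaufman, which is a welcome addition rather than a deviation.
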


A simple modification of the proof of \cite[Theorem~1]{Kaufman}, which is the Euclidean version of Theorem~\ref{Kaufman metric intro}, shows that Theorem~\ref{Q-regular} is true when the assumption of Ahlfors regularity is replaced by the assumption that $E$ has the following covering property for sufficiently large values of the parameter $\sigma$:

\begin{definition}\label{evenly coverable} Let $\sigma \geq 1$. A subset $E$ of a metric space $(X,d)$ is \emph{$\sigma$-evenly coverable} if there exists a constant $C \geq 1$ such that for all sufficiently small $\ep>0$, there exists a cover $\{B(x_k,r_k):k \in \nats\}$ of $E$ by balls centered in $E$ such that
\begin{itemize}
\item[i)]$\sup_{k \in \nats} r_k \leq \ep$,
\item[ii)] $\sum_{k \in \nats} r_k^{\dim E} \leq C$,
\item[iii)] $\sup_{x \in X} \sum_{k \in \nats} \chi_{B(x_k,\sigma r_k)}(x)  \leq  C$.
\end{itemize}
\end{definition}

Hence, Theorem \ref{Q-regular} follows from the following proposition.

\begin{proposition}\label{Q reg cover}  Let $(X,d)$ be a metric space, and let $E \subeq X$ be a bounded Ahlfors $t$-regular subset of $X$. Then $E$ is $\sigma$-evenly coverable for every $\sigma \geq 1$.
\end{proposition}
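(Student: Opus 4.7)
The plan is to construct, for each sufficiently small $\epsilon > 0$, the required cover by taking balls of equal radius $\epsilon$ centered at a maximal $\epsilon$-separated subset of $E$. Ahlfors regularity then controls both the packing estimate in (ii) and the multiplicity estimate in (iii) via the standard volume-comparison argument: pack disjoint half-balls inside an enlarged ball, compare measures.

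More precisely, I would fix a small $\epsilon > 0$ and, using Zorn's lemma, extract a maximal collection $\{x_k\}_{k \in I} \subseteq E$ with pairwise distances at least $\epsilon$. Setting $r_k := \epsilon$, condition (i) is immediate, and maximality forces $\{B(x_k,r_k)\}$ to cover $E$. The key observation is that the half-sized balls $B(x_k,\epsilon/2)$ are pairwise disjoint. Since $E$ is bounded and Ahlfors $t$-regular, $\dim E = t$ and $\mathcal{H}^t(E) < \infty$; the Ahlfors lower bound $\mathcal{H}^t(B(x_k,\epsilon/2)\cap E) \geq c\epsilon^t$ combined with disjointness therefore bounds $|I|$ by a constant multiple of $\epsilon^{-t}$, which yields $\sum_k r_k^{\dim E} = |I|\cdot\epsilon^t \leq C$, proving (ii).

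For condition (iii), fix $\sigma \geq 1$ and $x \in X$, and set $I_x := \{k \in I : x \in B(x_k,\sigma\epsilon)\}$. Assuming $I_x \neq \emptyset$, choose $k_0 \in I_x$; then for every $k \in I_x$ the triangle inequality gives $d(x_k,x_{k_0}) \leq 2\sigma\epsilon$, so $B(x_k,\epsilon/2) \subseteq B(x_{k_0}, (2\sigma+\tfrac12)\epsilon)$. Disjointness and Ahlfors regularity (lower bound for the $B(x_k,\epsilon/2)$, upper bound for the enclosing ball centered at $x_{k_0} \in E$) combine to give
$$|I_x|\cdot c(\epsilon/2)^t \;\leq\; \sum_{k \in I_x} \mathcal{H}^t\bigl(B(x_k,\epsilon/2)\cap E\bigr) \;\leq\; \mathcal{H}^t\bigl(B(x_{k_0},(2\sigma+\tfrac12)\epsilon)\cap E\bigr) \;\leq\; C(2\sigma+\tfrac12)^t\epsilon^t,$$
so $|I_x|$ is bounded by a constant depending only on $\sigma$, $t$, and the Ahlfors regularity data of $E$, uniformly in $x$ and $\epsilon$.

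There is no substantial obstacle here; the only point that requires care is ensuring that both the upper and lower Ahlfors bounds are valid at the scales $\epsilon/2$ and $(2\sigma+\tfrac12)\epsilon$. Since Ahlfors regularity is assumed to hold up to some fixed scale comparable to $\diam E$, this is guaranteed once $\epsilon$ is taken small enough relative to $(\diam E)/(2\sigma+\tfrac12)$, which is exactly what the phrase "for all sufficiently small $\epsilon$" in Definition~\ref{evenly coverable} permits. Taking $C$ to be the maximum of the constants produced in the two steps above completes the proof.
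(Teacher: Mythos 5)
Your proof is correct and follows essentially the same route as the paper's: maximal $\epsilon$-separated sets, disjointness of the half-radius balls, and Ahlfors volume comparison for both the packing estimate (ii) and the multiplicity estimate (iii). The paper uses the slightly cruder radius $3\sigma\epsilon$ for the enclosing ball where you use $(2\sigma+\tfrac12)\epsilon$, but the argument is otherwise the same.
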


\begin{proof} Let $\sigma \geq 1$. We assume that there is a constant $K \geq 1$ such that for any $r \leq 2 \diam E$ and $x \in E$, 
$$\frac{r^t}{K} \leq \Hdim^t_{X}(B(x,r) \cap E) \leq Kr^t.$$
In particular, this implies that $\Hdim^t_X(E)<\infty$.

Let $\ep>0$, and consider a maximal $\ep$-separated set $\{x_1,\hdots,x_N\}$ in $E$. Then $\{B(x_k,\ep)\}_{k=1}^N$ covers $E$, while $\{B(x_k,\frac{\ep}{2})\cap E\}_{k=1}^N$ is disjoint. Thus 
\begin{equation}\label{calculate} \sum_{k=1}^N \ep^t \leq \sum_{k=1}^N 2^tK\Hdim^t_{X}\left(B\left(x_k,\frac{\ep}{2}\right)\cap E \right) \leq 2^tK\Hdim^t_X(E).\end{equation}
This shows that $\{B(x_k,\ep)\}_{k=1}^N$ satisfies conditions (i) and (ii) in the definition of $\sigma$-even coverability. 

Suppose that 
$$\bigcap_{k \in I} B(x_k,\sigma\ep) \neq \emptyset$$
where $I$ is a subset of $\{1,\hdots,N\}$. To see that condition (iii) is verified, we must show that the cardinality of $I$ is bounded above by a number that does not depend on $\ep$. Let $i_0 \in I$. Then our assumption yields
$$\bigcup_{k \in I} B(x_k,\sigma \ep)\subeq B(x_{i_0},3\sigma\ep).$$
This implies that 
\begin{align*} (\card{I})\ep^t  &\leq \sum_{k \in I} 2^tK\Hdim^t_{X}\left(B\left(x_k,\frac{\ep}{2}\right)\cap E \right) \\ &\leq 2^tK\Hdim^t_X(B(x_{i_0},3\sigma\ep)\cap E) \\ &\leq K^2(6\sigma \ep)^t.
\end{align*}
The desired bound on the cardinality of $I$ follows. \end{proof}

We now turn to the question of sharpness in Theorem \ref{Kaufman metric intro}. We first prove Theorem \ref{universal-sharp} on the existence of mappings with $L^p$ upper gradients exhibiting optimal dimension increase. Note that the $Q$-Poincar\'e inequality is {\it not} assumed in Theorem \ref{universal-sharp}.

\begin{proof}[Proof of Theorem \ref{universal-sharp}]
The proof is a modified version of those appearing in \cite{Kaufman} and \cite{BMT}. The main novelty is that we employ maximal separated sets in place of dyadic cubes; this in fact simplifies the proof. Let $(X,d,\mu)$ be a locally $Q$-Ahlfors regular metric measure space and let $E\subseteq X$ be a compact set with $\Hdim^s(E)>0$ for some $0\le s\le Q$. We may assume without loss of generality that $\diam E < 1$. By Frostman's Lemma \cite[Theorem 8.17]{Mattila}, there is a finite and nontrivial Borel measure $\nu$ supported on $E$ such that
\begin{equation}\label{Frostman} \nu(B(x,r)) \leq r^s\end{equation}
for each $x \in X$ and $r>0$.

For each $n \in \nats$, let $X_n$ be a maximal $2^{-n}$-separated set in $E$; we may assume that $X_1 \subeq X_2 \subeq \hdots$.  Define
$$\mathcal{Q}_n = \{B(z,2^{-n}): z \in X_n\} \ \text{and}\ \mathcal{Q}= \bigcup_{n \in \nats} \mathcal{Q}_n.$$
Note that each $\mathcal{Q}_n$ is finite. The local doubling condition on $X$ implies that there is a constant $C \geq 1$ such that
\begin{equation}\label{bdd overlap}\sum_{B \in \mathcal{Q}_n} \chi_{100B}(x) \leq C\end{equation}
for all $x \in X$ and all $n \in \nats$.

For each $B \in \mathcal{Q}$, we may find a Lipschitz function $\psi_B \colon X \to [0,1]$ such that $\psi_B|_{\ovl{B}} =1$, the support of $\psi_B$ is contained in $2B$, and
$$\Lip \psi_B \lesssim (\diam B)\inv.$$
Here $\Lip f$ denotes the pointwise Lipschitz constant of the function $f$, defined by
$$
\Lip f(x) = \limsup_{y\to x} \frac{|f(y)-f(x)|}{d(x,y)}.
$$

Let $\xi\colon \mathcal{Q} \to \ovl{B}_{\reals^N}(0,1)$ be a function. For each $n \in \nats$, define $f_{\xi,n} \colon X \to \reals^N$ by
$$f_{\xi,n}(x) = \sum_{B \in \mathcal{Q}_n} \nu(100B)^{1/\alpha}\psi_B(x)\xi(B).$$
Now, define $f_\xi \colon X \to \reals^N$ by
$$f_\xi(x) = \sum_{n \in \nats}(1+n)^{-2}f_{\xi,n}(x).$$

Then $f_\xi$ is continuous and bounded. Since $f_{\xi,n}$ is locally Lipschitz, the function $\Lip f_{\xi,n}$ is an upper gradient of $f_{\xi,n}$ \cite{Cheeger}. We claim that the sequence of norms $\{||\Lip f_{\xi,n}||_{{\rm L}^p}\}_{n \in \nats}$ is bounded. Using the bounded overlap condition \eqref{bdd overlap}, the Frostman condition \eqref{Frostman}, and Ahlfors $Q$-regularity, we calculate that
\begin{align*}||\Lip f_{\xi,n}||_{{\rm L}^p}^p & \lesssim 2^{np} \sum_{B \in \mathcal{Q}_n}  \nu(100B)^{p/\alpha}\mu(2B)\\
& \lesssim 2^{n\left((p-Q)-s\left(\frac{p}{\alpha} -1\right)\right)}\sum_{B \in \mathcal{Q}_n} \nu(100B).\end{align*}
Our choice of $\alpha$ implies that
$$(p-Q)-s\left(\frac{p}{\alpha} -1\right) = 0,$$
and hence another application of the bounded overlap condition \eqref{bdd overlap} shows that
$$||\Lip f_{\xi,n}||_{{\rm L}^p}^p \lesssim \sum_{B \in \mathcal{Q}_n} \nu(100B) \lesssim \nu(X).$$
These facts imply that $\Lip f_{\xi}\in{\rm L}^p$ is an upper gradient of $f_\xi$. 

We now choose the vectors $\xi_B$ randomly. More precisely, we assume that the functions $\{\xi_B\}_{B \in \mathcal{Q}}$ are independent random variables distributed according to the uniform probability distribution on the closed unit ball $\ovl{B}_{\reals^N}(0,1)$, and hence the resulting function $\xi \colon \mathcal{Q} \to \ovl{B}_{\reals^N}(0,1)$ can also be considered as random variable; the expected value of this random variable is denoted by $\E_\xi$.  We claim that $\dim f_\xi(E) \geq \alpha$ almost surely. The desired result follows from this claim.

For $t>0$ denote by $I_t(\lambda)$ the {\it $t$-energy} of a compactly supported Radon measure $\lambda$ on a metric space $X$, i.e.
$$I_t(\lambda) = \iint d(x,y)^{-t} \, d\lambda(x) \, d\lambda(y).$$
If $I_t(\lambda)$ is finite, then the Hausdorff dimension of the support of $\lambda$ is at least $t$ \cite[Theorem 8.7]{Mattila}.

 We will prove that for every $\alpha' < \alpha$,
$$\E_\xi \left( I_{\alpha'} ( (f_\xi)_\#(\nu \lfloor E) ) \right) < \infty,
$$
which implies that $\dim f_\xi(E) \ge \alpha'$ almost surely; letting $\alpha'$ tend to $\alpha$ will complete the proof.

By the Fubini--Tonelli theorem, it suffices to prove that
\begin{equation} \label{energy}
\iint \E_\xi \left( |f_\xi(x) - f_\xi(y)|^{-\alpha'} \right) \, d\nu(y) \, d\nu(x) < \infty.
\end{equation}
We write
$$
f_\xi(x) - f_\xi(y) = \sum_{B \in \mathcal{Q}} c_B(x,y) \, \xi_B,
$$
where
$$
c_B(x,y) = (1+n)^{-2} \, \nu(100B)^{\frac{1}{\alpha}} ( \psi_B(x) - \psi_B(y) ) \qquad \mbox{when }B \in \mathcal{Q}_n.
$$
We denote by $||c(x,y)||_\infty$ the maximum of the set of numbers $\{c_B(x,y)\}_{B \in \mathcal{Q}}.$
Note that $||c(x,y)||_\infty = |c_{Q_0}(x,y)|$ for some $Q_0 \in \mathcal{Q}$, since
$\sum_{B \in \mathcal{Q}}|c_B(x,y)|$ is finite. By \cite[Lemma 4.4]{BMT}, it holds that
$$\E_\xi \left( |f_\xi(x) - f_\xi(y)|^{-\alpha'} \right) \lesssim ||c(x,y)||_\infty^{-\alpha'}.$$
In view of this, it remains to show that
$$
\int_E \int_E ||c(x,y)||_\infty^{-\alpha'} \, d\nu(y) \, d\nu(x) < \infty.
$$
We will in fact show the stronger statement
$$
\sup_{x \in E} \int ||c(x,y)||_\infty^{-\alpha'} \, d\nu(y) <\infty.
$$
Since $\nu(E)<\infty$ this suffices.

Fix $x \in E$. For each $y \in E$, define $n(y) \in \nats$ by
$$2^{-n(y)+2} \leq d(x,y) < 2^{-n(y)+3}.$$
Choose a ball $B \in \mathcal{Q}_{n(y)}$ that contains $x$. Then $y \in 100B \bslash 2B$, and so
$$
||c(x,y)||_\infty \ge |c_{B}(x,y)| = (1+n(y))^{-2} \nu(100B)^{\frac{1}{\alpha}}.
$$
For each $n \in \nats$, denote by $E_n$ the set of points $y\in E$ for which $n(y)=n$. As above $E_n \subset 100B_n\bslash 2B_n$, where $B_n \in \mathcal{Q}_{n}$ contains $x$. Thus, by the above argument and the Frostman estimate \eqref{Frostman},
\begin{align*}
\int ||c(x,y)||_\infty^{-\alpha'} \, d\nu(y) &= \sum_{n \in \nats} \int_{E_n} ||c(x,y)||_\infty^{-\alpha'}\ d\nu(y) \\
& \le \sum_{n \in \nats} n^{2\alpha'} \nu(100B_n)^{1-\frac{\alpha'}{\alpha}}\\
& \lesssim \sum_{n \in \nats} n^{2\alpha'}  2^{-ns\left(1-\tfrac{\alpha'}{\alpha}\right)}.\end{align*}
Since $\alpha'<\alpha$, the final sum converges.

We now  show that the set of Sobolev mappings that distort the Hausdorff dimension of a given set in the maximal way is prevalent, in the sense of Hunt--Sauer--Yorke \cite{hunt-sauer-yorke1992}, \cite{sauer-yorke1997} (see also \cite{ott-yorke2005}, \cite{Christensen}, and \cite{Aronszajn}). 

To recall the notion of prevalence, let $\mathbb{B}$ be a complete metric vector space (typically infinite dimensional). A compactly supported Borel measure $\lambda$ on $\mathbb{B}$ is said to be {\it transverse} to a Borel set $S\subseteq \mathbb{B}$ if $\lambda(S+x)=0$ {\it for every } $x\in \mathbb{B}$. A set $S'\subseteq  \mathbb{B}$ is called to be {\it shy} if there exists a Borel set $S$ such that $S'\subseteq S\subseteq \mathbb{B}$ and a Borel measure $\lambda$ that is transverse to $S$. Using convolutions of measures it can be checked that the countable union of shy sets is again shy \cite{hunt-sauer-yorke1992, ott-yorke2005}. A set $Y\subseteq \mathbb{B}$ is called {\it prevalent} if its complement $S=\mathbb{B}\setminus Y$ is shy. Clearly, the countable intersections of prevalent sets is again prevalent and prevalent sets are dense in $\mathbb{B}$. If $\mathbb{B}=\R^{n}$, then $Y\subseteq \R^{n}$ is prevalent if and only if it is a full Lebesgue measure set. The concept of prevalence has been introduced as a measure-theoretic notion of genericity in infinite dimensional spaces, especially function spaces. We will use this notion for the Newtonian--Sobolev space $\mathbb{B}={\rm N}^{1,p}(X;\R^{N})$.

We consider a compact subset $E \subeq X$ such that $\Hdim^s(E)>0$, and wish to show that the set of Newtonian Sobolev mappings $f \in {\rm N}^{1,p}(X; \reals^N)$ with the property that $\dim f(E) \geq \alpha$ is prevalent.  Notice first that it is enough to show that
\begin{equation} \label{alpha-prev}
\dim f(E) \geq \alpha' \ \text{for a prevalent subset} \   W_{\alpha'} \  \text{of maps in} \ {\rm N}^{1,p}(X; \R^{N})
\end{equation}
for each $\alpha' < \alpha$. Indeed, assuming that this is true we obtain prevalent subsets $W_{n}$, $n \in \nats$, for which
\begin{equation} \label{n-prevalent}
\dim f(E) \geq \alpha - \frac1n \quad \mbox{for every} \   f\in W_{n}.
\end{equation}
Now set $W=\bigcap_{n}W_{n}$, which is again prevalent in ${\rm N}^{1,p}(X;\R^{N})$ as the countable intersection of prevalent sets. Letting $n \to \infty$ in \eqref{n-prevalent} we obtain that $\dim f(E) \geq \alpha$ for all $f \in W$.

Note that by \eqref{energy} there exists a continuous mapping $g \in {\rm N}^{1,p}(X;\R^{N})$ with the property that
\begin{equation} \label{g-energy}
\iint  |g(x) - g(y)|^{-\alpha'} \, d\nu(y) \, d\nu(x) < \infty.
\end{equation}
Here $\nu$ denotes the Frostman measure on $E$, as in \eqref{Frostman}.

Statement \eqref{alpha-prev} is implied by the following lemma.
\begin{lemma}\label{prev-lemma}
Let $g\in {\rm N}^{1,p}(X;\R^{N})$ satisfy \eqref{g-energy}. Denote by $Z$ the set of all $N\times N$ matrices with entries less than or equal to one in absolute value. Then for all $f_{0}\in {\rm N}^{1,p}(X;\R^{N})$ the function $f_{L}= f_{0}+Lg$ satisfies
\begin{equation} \label{L-dimension}
\dim f_{L}(E) \geq \alpha' \quad \mbox{for almost every} \ L \in Z.
\end{equation}
\end{lemma}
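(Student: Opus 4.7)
The approach is to use the potential-theoretic criterion for Hausdorff dimension: if a nontrivial Radon measure $\lambda$ supported on a set $A \subseteq \R^{N}$ has finite $t$-energy $I_{t}(\lambda)$, then $\dim A \geq t$. Applied to the pushforward $(f_{L})_{\sharp}(\nu\lfloor E)$, which is supported on $f_{L}(E)$, this reduces the lemma to showing that for Lebesgue almost every $L \in Z$ the energy $I_{\alpha'}\bigl((f_{L})_{\sharp}(\nu\lfloor E)\bigr)$ is finite. By Tonelli's theorem, it will suffice to establish
$$\int_{Z} I_{\alpha'}\bigl((f_{L})_{\sharp}\nu\bigr)\, dL = \iint_{E\times E}\left(\int_{Z} |f_{L}(x)-f_{L}(y)|^{-\alpha'}\, dL\right) d\nu(y)\, d\nu(x) < \infty,$$
since then the inner integrand is finite for almost every $L$, forcing $\dim f_{L}(E)\geq \alpha'$ for almost every $L \in Z$.

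Since $f_{L}(x)-f_{L}(y) = \bigl(f_{0}(x)-f_{0}(y)\bigr) + L\bigl(g(x)-g(y)\bigr)$, the innermost integral has the form $\int_{Z}|v+Lu|^{-\alpha'}\, dL$ with $v = f_{0}(x)-f_{0}(y)$ and $u = g(x)-g(y)$. The central technical estimate I plan to prove is that for every $v \in \R^{N}$ and every $u \in \R^{N}\setminus\{0\}$,
$$\int_{Z} |v+Lu|^{-\alpha'}\, dL \lesssim |u|^{-\alpha'},$$
with implicit constant depending only on $N$ and $\alpha'$. The strategy is to analyze $Lu$ as a random vector in $\R^{N}$: writing $L \in Z$ in terms of its rows $L_{1},\ldots,L_{N}$, which under the normalized Lebesgue measure on $Z$ are independent and uniform on $[-1,1]^{N}$, the $i$-th coordinate of $Lu$ is the linear functional $L_{i}\cdot u$, whose one-dimensional density is bounded by $C/|u|$. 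Independence of the rows then yields that the joint density of $Lu$ on $\R^{N}$ is bounded by $C_{N}/|u|^{N}$, with support contained in a ball of radius $\sqrt{N}\,|u|$. The hypothesis $N > \alpha$ from Theorem \ref{universal-sharp} gives $\alpha' < \alpha < N$, so $|w|^{-\alpha'}$ is locally integrable on $\R^{N}$ with $\int_{B(w_{0},r)}|w|^{-\alpha'}\, dw \lesssim r^{N-\alpha'}$ uniformly in $w_{0}$. Combining the density bound with this local integral bound, applied to the ball of radius $\sqrt{N}\,|u|$ centered at $-v$, produces the desired estimate.

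With the key estimate in hand, applying it with $u = g(x)-g(y)$ and $v = f_{0}(x)-f_{0}(y)$ bounds the triple integral above by a constant multiple of
$$\iint_{E\times E} |g(x)-g(y)|^{-\alpha'}\, d\nu(y)\, d\nu(x),$$
which is finite by hypothesis \eqref{g-energy}. The main technical obstacle is precisely the density estimate for $Lu$; the decisive ingredient is the local integrability of $|w|^{-\alpha'}$ on $\R^{N}$, and this is the only point at which the assumption $N > \alpha$ from Theorem \ref{universal-sharp} enters the argument.
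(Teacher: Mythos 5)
Your proposal is correct and follows essentially the same route as the paper: reduce \eqref{L-dimension} to finiteness of the $\alpha'$-energy of $(f_{L})_{\sharp}(\nu\lfloor E)$ for a.e.\ $L$, pass to the triple integral over $Z\times E\times E$ by Tonelli, establish the pointwise bound $\int_{Z}|v+Lu|^{-\alpha'}\,dL\lesssim |u|^{-\alpha'}$ with $u=g(x)-g(y)$, $v=f_{0}(x)-f_{0}(y)$, and conclude from \eqref{g-energy}. The only difference is that where the paper invokes Lemma \ref{int-est} (quoted from Hunt--Kaloshin and Sauer--Yorke), you prove that estimate directly via the bounded density of the random vector $Lu$ together with local integrability of $|w|^{-\alpha'}$ for $\alpha'<N$, and that argument is sound.
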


The proof of Lemma \ref{prev-lemma} is similar to the proof of Proposition 3.2 from \cite{hunt-kaloshin1997}. For the convenience of the reader we provide a sketch. The idea is again to use energy estimates: we shall show that
\begin{equation} \label{fL-energy}
\iint  |f_{L}(x) - f_{L}(y)|^{-\alpha'}  \, d\nu(y) \, d\nu(x) < \infty,  \quad \mbox{for almost every} \ L \in Z,
\end{equation}
which in turn will follow from the boundedness of the triple integral
\begin{equation} \label{triple-energy}
\int_{Z}\iint  |f_{L}(x) - f_{L}(y)|^{-\alpha'}  \, d\nu(y) \, d\nu(x) \, dL < \infty.
\end{equation}

To prove \eqref{triple-energy} we will use the following

\begin{lemma} \label{int-est}
Let $\Phi$ be a linear transformation from the set of $N\times N$ matrices to $\R^{N}$ and let $b\in \R^{N}$ be a fixed vector. Assume that the image of
$Z$ under $\Phi$ contains a cube of width $\delta$ in $\R^{N}$. Then for $\alpha' < N$ we have
\begin{equation} \label{Z-int}
\int_{Z}\frac{dL}{|\Phi (L) +b|^{\alpha'}} \leq \frac{C}{\delta^{\alpha'}},
\end{equation}
where $C$ is a constant depending only on $N$ and $\alpha'$.
\end{lemma}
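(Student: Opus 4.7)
The plan is to reduce the $N^2$-dimensional integral to an explicit radial integral in $\R^N$ by choosing matrix-space coordinates adapted to $\Phi$, then applying Fubini, a linear change of variables, and the Hardy--Littlewood rearrangement inequality.

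\emph{Finding good coordinates.} Let $\{E_\alpha\}_{\alpha \in A}$ denote the standard basis of the space of $N\times N$ matrices, where $A$ indexes the $N^2$ entries, and set $v_\alpha = \Phi(E_\alpha) \in \R^N$. Since $Z = [-1,1]^A$ is a cube, its image $\Phi(Z)$ is a zonotope, whose Lebesgue volume is given by the classical zonotope volume formula
$$\mathrm{vol}_{\R^N}(\Phi(Z)) \;=\; 2^N \sum_{I\subseteq A,\, |I|=N} |\det V_I|, \qquad V_I := (v_\alpha)_{\alpha\in I}.$$
The hypothesis that $\Phi(Z)$ contains a cube of width $\delta$ forces the left-hand side to be at least $\delta^N$; hence by pigeonholing over the $\binom{N^2}{N}$ summands on the right, there exists an $N$-element index set $I \subseteq A$ with $|\det V_I| \geq c_N\, \delta^N$ for a constant $c_N$ depending only on $N$.

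\emph{Fubini, change of variables, and rearrangement.} Fix such an $I$ and decompose every $L \in Z$ as $L = L_I + L_{I^c}$, where $L_I$ is supported on $I$ and $L_{I^c}$ on its complement; then $\Phi(L) = V_I L_I + \Phi_{I^c}(L_{I^c})$. Applying Fubini to the integral in \eqref{Z-int} gives
$$\int_Z \frac{dL}{|\Phi(L)+b|^{\alpha'}} \;=\; \int_{[-1,1]^{I^c}} \left( \int_{[-1,1]^I} \frac{dL_I}{|V_I L_I + c(L_{I^c})|^{\alpha'}} \right) dL_{I^c},$$
where $c(L_{I^c}) := \Phi_{I^c}(L_{I^c}) + b$ is a constant for the inner integration. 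In the inner integral, substitute $u = V_I L_I + c(L_{I^c})$; the Jacobian is $|\det V_I|$, and the new domain is the parallelepiped $V_I([-1,1]^I) + c(L_{I^c})$, of $\R^N$-volume $2^N|\det V_I|$. Because $u \mapsto |u|^{-\alpha'}$ is radially decreasing and $\alpha' < N$, the Hardy--Littlewood rearrangement inequality bounds the integral of $|u|^{-\alpha'}$ over this parallelepiped by the integral over the ball of equal volume centered at the origin, which evaluates to $C_{N,\alpha'}|\det V_I|^{1-\alpha'/N}$. Dividing by the Jacobian and inserting the bound $|\det V_I| \geq c_N\delta^N$ from the previous step, the inner integral is at most $C'_{N,\alpha'}\, \delta^{-\alpha'}$, while the outer integration contributes only the factor $2^{N^2 - N}$, yielding the desired estimate with a constant depending only on $N$ and $\alpha'$.

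The main obstacle is the first step: the purely geometric hypothesis that $\Phi(Z)$ contains a cube of width $\delta$ must be converted into a quantitative lower bound on some $N\times N$ minor of the matrix of $\Phi$, which is exactly what the zonotope volume formula delivers. An alternative route via singular values of $\Phi$ or Gram determinants on $(\ker \Phi)^{\perp}$ is available but sacrifices the product structure of the cube $Z$ that makes the Fubini decomposition in the second step go through without effort.
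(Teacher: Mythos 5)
Your proof is correct. Note that the paper does not actually prove Lemma \ref{int-est}: it simply cites Hunt--Kaloshin (Lemma 3.3) and Sauer--Yorke (Lemma 2.6), so what you have written is a self-contained substitute rather than a reproduction of an argument in the text. Those references proceed in the same general spirit --- isolate an $N$-dimensional slice of matrix space on which $\Phi$ is quantitatively nondegenerate, apply Fubini, and change variables so the singular factor becomes $\int |u|^{-\alpha'}\,du$ over a region of controlled volume, finite because $\alpha'<N$ --- but your mechanism for producing the slice is your own and is a genuinely nice device: the zonotope volume formula $\mathrm{vol}(\Phi(Z))=2^N\sum_{|I|=N}|\det V_I|$, combined with $\mathrm{vol}(\Phi(Z))\ge\delta^N$ and pigeonholing over the $\binom{N^2}{N}$ minors, yields a \emph{coordinate} subset $I$ of matrix entries with $|\det V_I|\ge c_N\delta^N$, so that the complementary variables still range over a coordinate cube and the Fubini split is immediate; an adapted (non-coordinate) subspace, as in the alternative you mention, would force one either to enlarge $Z$ or to track a more awkward sliced domain. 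The remaining steps all check out: Tonelli applies since the integrand is nonnegative (it is infinite only on a Lebesgue-null affine subspace, and indeed $\det V_I\neq 0$ once $\delta>0$, while for $\delta=0$ the claim is vacuous); the linear substitution has Jacobian $|\det V_I|$; the bathtub/rearrangement bound over the parallelepiped of volume $2^N|\det V_I|$ gives $C_{N,\alpha'}|\det V_I|^{1-\alpha'/N}$ (valid since $0<\alpha'<N$ in the intended range), so the inner integral is at most $C_{N,\alpha'}|\det V_I|^{-\alpha'/N}\le C'_{N,\alpha'}\delta^{-\alpha'}$ uniformly in $L_{I^c}$, and the outer integration contributes only the factor $2^{N^2-N}$. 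All constants depend only on $N$ and $\alpha'$, as the statement requires.
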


A proof of Lemma \ref{int-est} may be found in \cite[Lemma 3.3]{hunt-kaloshin1997} and \cite[Lemma 2.6]{sauer-yorke1997}.

We apply Lemma \ref{int-est} to $b:= f_{0}(x)-f_{0}(y)$ and $\Phi(L):=L(g(x)-g(y))$, noting that $\Phi(Z)$ contains a cube of width comparable to $\delta = |g(x)-g(y)|$. This implies
\begin{equation} \label{appl-claim}
\int_{Z}  |f_{L}(x) - f_{L}(y)|^{-\alpha'}  dL\leq \frac{K}{|g(x)-g(y)|^{\alpha'}} .
\end{equation}
By the Fubini--Tonelli theorem we can estimate the integral in \eqref{triple-energy} using \eqref{appl-claim} and \eqref{g-energy} as follows:
\begin{align*} \int_{Z}\iint   |f_{L}(x) - f_{L}(y)|^{-\alpha'}  \, d\nu(y) \, d\nu(x) \, dL  \\
= \iint  \int_{Z} |f_{L}(x) - f_{L}(y)|^{-\alpha'}  \, dL \, d\nu(y) \, d\nu(x)
 \\ \leq K \iint  |g(x) - g(y)|^{-\alpha'}  \, d\nu(y) \, d\nu(x) < \infty.
\end{align*}
This finishes the proof of Lemma \ref{prev-lemma} and completes the proof of Theorem~\ref{universal-sharp}.
\end{proof}

\section{Regular foliations of a metric space}\label{reg foliations section}

In this section we discuss bounds on dimension increase under Sobolev mappings for leaves in an $s$-foliation of a metric space. In particular, we prove Theorem \ref{foliation}. Following the proof, we provide some comments regarding the limitations of that theorem and alternate methods to derive similar estimates.

The first step in the proof of Theorem \ref{foliation} is the following lemma, which enables us to use Frostman's lemma.

\begin{lemma}\label{union of compacts}
Assume the notation and hypotheses of Theorem \ref{foliation}. Then, for any compact set $K \subeq X$, the set
$$E_\alpha = \{a \in W: \Hdim^\alpha(f(\pi\inv(a) \cap K))>0\}$$
is a countable union of compact sets.
\end{lemma}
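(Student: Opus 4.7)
The plan is to present $E_\alpha$ as an $F_\sigma$ set via a threshold. First I recall that for any set $A$, one has $\Hdim^\alpha(A)>0$ if and only if the Hausdorff content $\Hdim^\alpha_\infty(A)>0$: the inequality $\Hdim^\alpha_\infty\leq \Hdim^\alpha$ gives one direction, and the other follows from the observation that any cover $\{A_i\}$ witnessing $\sum_i(\diam A_i)^\alpha<\epsilon$ automatically has $\max_i \diam A_i\leq \epsilon^{1/\alpha}$, forcing $\Hdim^\alpha(A)=0$ as $\epsilon\to 0$. Consequently,
$$E_\alpha=\bigcup_{n\in\nats}E_\alpha^n,\qquad E_\alpha^n:=\{a\in W: \Hdim^\alpha_\infty(f(\pi\inv(a)\cap K))\geq 1/n\}.$$
Since $E_\alpha^n\subeq \pi(K)$ and $\pi(K)$ is compact, it suffices to prove each $E_\alpha^n$ is closed.

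Take $a_k\to a$ with $a_k\in E_\alpha^n$, and set $F_k=\pi\inv(a_k)\cap K$ and $F=\pi\inv(a)\cap K$. A standard compactness argument delivers the Hausdorff upper semi-continuity $F_k\subeq\nbhd(F,\epsilon_k)$ for some $\epsilon_k\to 0$: otherwise one extracts $x_k\in F_k$ with $\dist(x_k,F)\geq\epsilon>0$ and passes to a subsequential limit $x\in K$; since $\pi(x)=a$, this forces $x\in F$, a contradiction. Uniform continuity of $f|_K$ then transfers this to $f(F_k)\subeq\nbhd(f(F),\delta_k)$ with $\delta_k\to 0$.

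To conclude, I would invoke upper semi-continuity of $\Hdim^\alpha_\infty$: given $\eta>0$, choose a countable ball cover $\{B(y_i,r_i)\}$ of the compact set $f(F)$ with $\sum_i r_i^\alpha<\Hdim^\alpha_\infty(f(F))+\eta$. The open set $V=\bigcup_i B(y_i,r_i)$ contains $f(F)$ compactly, hence contains $\nbhd(f(F),\eta')$ for some $\eta'>0$. For $k$ large enough that $\delta_k<\eta'$, the same cover bounds $\Hdim^\alpha_\infty(f(F_k))$, giving $1/n\leq \limsup_k\Hdim^\alpha_\infty(f(F_k))\leq \Hdim^\alpha_\infty(f(F))+\eta$; sending $\eta\to 0$ places $a$ in $E_\alpha^n$.

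The key technical point—and the principal obstacle I anticipate—is precisely this last semi-continuity step, which is why the argument must be phrased in terms of content rather than measure: $\Hdim^\alpha$ is not upper semi-continuous under Hausdorff convergence of compact sets, whereas $\Hdim^\alpha_\infty$ is, essentially because any countable ball cover of a compact set extends to an open neighborhood of that set.
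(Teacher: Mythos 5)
Your proposal is correct and follows essentially the same route as the paper: the same $F_\sigma$ decomposition by Hausdorff-content thresholds $\Hdim^\alpha_\infty \geq 1/n$ (using that content and measure share null sets), the same upper semicontinuity of $a \mapsto f(\pi\inv(a)\cap K)$ from continuity of $f$ and $\pi$ with compactness of $K$, and the same trick of fattening a cheap ball cover of the compact limit image to an $\ep$-neighborhood. Your extra details (the subsequence argument for fiber semicontinuity, and compactness via $E_\alpha^n \subeq \pi(K)$ instead of properness of $W$) are fine and only make explicit what the paper leaves to the reader.
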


\begin{proof}
As $W$ is assumed to be proper, it suffices to show that $E_\alpha$ is a countable union of closed sets. Since the $\alpha$-dimensional Hausdorff measure and the $\alpha$-dimensional Hausdorff content have the same null sets, it suffices to show that for each $n \in \nats$, the set
$$E_\alpha(n)= \left\{a \in W: \Hdim^{\alpha}_{\infty}(f(\pi\inv(a) \cap K))\geq \frac{1}{n}\right\}$$
is closed. Let $\{a_j\}_{j\in \nats} \subeq E_\alpha(n)$ be a sequence converging to a point $a \in W$.  Since $f$ and $\pi$ are continuous, for every $\ep>0$, there is an index $j(\ep) \in \nats$ such that if $j \geq j(\ep)$, then
$$f(\pi\inv(a_j) \cap K) \subeq \nbhd_{Y}(f(\pi\inv(a) \cap K), \ep).$$
If $a \notin E_\alpha(n)$, then there is a cover $\{B_Y(y_i,r_i)\}_{i \in \nats}$ of $f(\pi\inv(a) \cap K)$ by open balls such that
$$\sum_{i \in \nats} r_i^\alpha < \frac{1}{n}.$$
Since $f(\pi\inv(a) \cap K)$ is compact, we may find $\ep>0$ such that the neighborhood $\nbhd_Y(f(\pi\inv(a) \cap K),\ep)$ is also covered by $\{B_Y(y_i,r_i)\}_{i \in \nats}$. This implies that
$$\Hdim^{\alpha}_{\infty}(f(\pi\inv(a_j) \cap K)) < \frac{1}{n}$$
for all $j \geq j(\ep)$, which yields the desired contradiction.
\end{proof}

\begin{proof}[Proof of Theorem \ref{foliation}]
For ease of notation, denote
$$\beta = (Q-s)-p\left(1-\frac{s}{\alpha}\right).$$
As we only consider the case that $\alpha > s$, it suffices to show that 
$$\dim\{a \in W: \Hdim^{\alpha}(f(\pi\inv(a)))>0\} \leq \beta.$$
Let $K$ be an arbitrary compact subset of $X$, and set
$$E_\alpha = \{a \in W: \Hdim^\alpha(f(\pi\inv(a) \cap K))>0\}.$$
By the stability of Hausdorff dimension under countable unions and the properness assumption on $X$, the desired result will follow if $\dim E_{\alpha} \leq \beta$.

Define a function $\phi \colon [\beta,\infty) \times (Q,p] \to \reals$ by
$$\phi(t,q) =  \left(\left(1-\frac{Q}{q}\right)\alpha + t-s\left(1-\frac{\alpha}{q}\right)\right)\frac{q}{q-\alpha}.$$
Our assumptions imply that $\alpha < Q$, and so $\phi$ is continuous.

Now suppose, by way of contradiction, that $\dim E_\alpha > \beta$, and let $t \in (\beta, \dim E_{\alpha})$. Note that $\phi(\beta,p) = \beta$.  Moreover, since $t>\beta$,
\begin{equation}\label{phi ineq} \phi(t,p) = \beta + \left(t-\beta\right) \frac{p}{p-\alpha} > t.\end{equation}
Since $\phi$ is continuous, we may find $q \in (Q,p) $ such that
\begin{equation}\label{q def} t' :=\phi(t,q) > t.\end{equation}

The countable stability of Hausdorff dimension and Lemma \ref{union of compacts} allows us to reduce to the case that $E_\alpha$ is compact. Since $t<\dim E_\alpha$, it holds that $\Hdim^t(E_\alpha)=\infty$, and so by \cite[Theorem~8.19]{Mattila} there exists a compact subset $E \subeq E_\alpha$ such that $0<\Hdim^t(E)<\infty$. Frostman's lemma \cite[Theorem~8.17]{Mattila} yields a nonzero Borel measure $m$ supported on $E$ with the property that  the upper mass bound $m(B_W(a,r))\leq r^t$ is valid for every $a \in W$ and all $r>0$.

Let $\del, \ep > 0$. As $t'>t$, it holds that $\Hdim^{t'}(E) = 0$, and so we may find a countable cover $\{B_W(a_i,r_i)\}_{i \in \nats}$ of $E$ such that $r_i < \del$ for all $i \in \nats$ and
\begin{equation}\label{small r}
\sum_{i \in \nats} r_i^{t'} < \ep.
\end{equation}
By choosing $\del$ sufficiently small, we may apply the foliation condition to each ball $B_W(a_i,r_i)$, producing a constant $C \geq 1$ and a cover $\{B_{i,j}:j=1,\ldots,N_i\}$ of $\pi\inv(B_W(a_i,r_i)) \cap K$ where
\begin{equation}\label{N-i-estimate}
N_i \leq Cr_i^{-s}
\end{equation}
and the radius of the ball $B_{i,j}$ is $Cr_i$. We apply the $5B$ covering theorem to the doubly indexed collection $\{B_{i,j} \colon i \in \nats , j=1,\hdots, N_i \}$ to produce a set $I \subeq \nats$ and for each $i \in I$ a (possibly empty) set $J_i \subeq \{1,\hdots, N_i\}$ so that
\begin{equation}\label{inverse cover}
\pi\inv(E) \cap K \subeq  \pi\inv\left(\bigcup_{i \in \nats} B_W(a_i,r_i)\right) \cap K \subeq \bigcup_{i \in I} \bigcup_{j \in J_i} 5B_{i,j},
\end{equation}
and so that $B_{i,j} \cap B_{i',j'} = \emptyset$ whenever $(i,j)\neq (i',j').$

Let $\tau > 0$. By the uniform continuity of $f$ on compact sets, if $\del$ is sufficiently small, then for any $a \in E$,
\begin{equation}\label{char function estimate}
\Hdim^{\alpha}_{\tau}(f(\pi\inv(a) \cap K)) \leq \sum_{i \in I}\sum_{j \in J_i} \chi_{\pi(B_{i,j})}(a)(\diam f(B_{i,j}))^\alpha.\end{equation}
Further reducing $\del$ if necessary so that we may apply Morrey's estimate (in the form of Proposition \ref{better Morrey}) and the local $Q$-homogeneity condition, we find a Borel function $g \in \Lloc^{\frac{p}{q}}$ such that
\begin{equation}\label{morrey-estimate}
\diam f(B_{i,j})\lesssim r_i^{1-\frac{Q}{q}} \left(\int_{(1/5)B_{i,j}} g \ d\mu\right)^{\frac{1}{q}}
\end{equation}
for each $i \in I$ and $j \in J_i$. Integrating \eqref{char function estimate} and using \eqref{morrey-estimate}, we see that
\begin{align*}
\int_{E} \Hdim^{\alpha}_{\tau}(f(\pi\inv(a) \cap K)) \ dm(a) & \lesssim \sum_{i \in I}\sum_{j \in J_i} m(\pi(B_{i,j})) (\diam f(B_{i,j}))^\alpha \\
& \lesssim \sum_{i \in I}\sum_{j \in J_i} m(\pi(B_{i,j}))r_{i}^{\left(1-\frac{Q}{q}\right)\alpha} \left(\int_{(1/5)B_{i,j}} g\right)^\frac{\alpha}{q}.\end{align*}
Here one may consider the integral as an upper integral to avoid tedious measurability issues. 

Since $\pi$ is Lipschitz on the compact set $$K'=\ovl{\bigcup_{i \in I, j \in J_i}B_{ij}},$$
the Frostman condition on $m$ implies that
$$m(\pi(B_{i,j})) \lesssim r_i^t,$$
again provided that $\del$ is small enough.  This estimate, together with \eqref{N-i-estimate} and two applications of H\"older's inequality, implies that
\begin{align*}
\int_{E} \Hdim^{\alpha}_{\tau}(f(\pi\inv(a) \cap K)) & \ dm(a)  \lesssim \sum_{i \in I}\sum_{j \in J_i} r_i^{\left(1-\frac{Q}{q}\right)\alpha + t} \left(\int_{(1/5)B_{i,j}} g  \ d\mu\right)^{\frac{\alpha}{q}}  \\
& \lesssim \sum_{i \in I} r_i^{\left(1-\frac{Q}{q}\right)\alpha + t-s\left(1-\frac{\alpha}{q}\right)}\left(\int_{\bigcup_{j \in J_i}(1/5)B_{i,j}} g \  d\mu\right)^{\frac{\alpha}{q}} \\
&\lesssim \left(\sum_{i \in I} r_i^{t'}\right)^{1-\frac{\alpha}{q}} \left(\int_{K'} g \  d\mu\right)^{\frac{\alpha}{q}}.
\end{align*}
In light of \eqref{small r} and the local integrability of $g$, we conclude that
$$\int_{E} \Hdim^{\alpha}_{\tau}(f(\pi\inv(a) \cap K))  \ dm(a) \lesssim \ep.$$
Letting $\ep\to 0$ implies that for $m$-almost every $a \in E$,
$$\Hdim^{\alpha}_{\tau}(f(\pi\inv(a) \cap K))=0=\Hdim^{\alpha}(f(\pi\inv(a) \cap K)).$$
This is a contradiction, as $m$ is supported on $E$ and $E$ is a subset of $E_\alpha$.
\end{proof}

\begin{remark}\label{limitations}
The wide generality allowed by the definition of a metric foliation comes at a price; the estimate of Theorem \ref{foliation} is not always optimal. Assume the hypotheses of Theorem \ref{foliation}. Denote
$$\hat{s} = \sup_{a \in W} \dim \pi\inv(a).$$
As noted in the introduction, it could be that $\hat{s} < s$; see subsection \ref{Heis foliation} below for an example.
By the universal dimension distortion bounds given in Theorem \ref{Kaufman metric intro},
$$
\{a \in W: \Hdim^\alpha(f(\pi\inv(a)))>0\} = \emptyset
$$
whenever $\alpha \geq \tfrac{p\hat{s}}{p-Q+\hat{s}}$. When $\alpha = \tfrac{p\hat{s}}{p-Q+\hat{s}}$ and $\hat{s}<s$, it holds that 
$$
(Q-s) - p\left(1-\tfrac{s}{\alpha}\right) = (p-Q)\left(\frac{s}{\hat{s}} - 1 \right) > 0,
$$
and so there is room for a possible improvement to the conclusion of Theorem \ref{foliation} in this situation.

The correct estimate in the case when $\alpha$ lies in the range $[\hat{s},s)$ is also unclear. Note that the proof of Theorem \ref{foliation} does not apply when $\alpha<s$. If $\dim W \leq Q-s$, as is the case for all of the examples considered in this paper, then the right hand side of \eqref{foliation-estimate} is strictly larger than $\dim W$ whenever $\alpha<s$, and hence the estimate \eqref{foliation-estimate} is true and trivial to prove in this case. In specific settings in the Heisenberg group, we can improve on this trivial estimate, using a different method to give nontrivial and asymptotically sharp estimates even in the case that $\alpha \in [\hat{s},s)$. See Section~\ref{Heis foliation} below and \cite{DimDistHeis} for further details.
\end{remark}

\begin{remark}
It seems likely that under additional assumptions on the foliation $(X,W,\pi)$, the conclusion of Theorem \ref{foliation} could be upgraded to
$$\Hdim^{(Q-s)-p\left(1-\frac{s}{\alpha}\right)}_W\left(\{a \in W: \dim f(\pi\inv(a)) \geq \alpha\}\right)=0,$$
as in Theorem \ref{Q-regular}. We leave such a generalization to the interested reader.
\end{remark}

%\begin{remark}\label{net} An $s$-foliation is also an $s'$-foliation for every $s'\geq s$. To prevent this, we could instead require that for every $r>0$, the truncated preimage $\pi \inv(B) \cap K$ contains a maximal $r$-separated set of cardinality $\simeq r^{-s}$. Under mild additional assumptions, this would allow one to prove, for example, that $\dim W \leq Q-s$, at least for certain notions of dimension.
%\end{remark}

\section{Examples of metric foliations}\label{examples section}

In this section we present various examples of metric foliations, and indicate the form that Theorem \ref{foliation} takes in such settings.

\subsection{Euclidean foliations}

As mentioned above, the class of submersions between Riemannian manifolds provides the model example of metric foliations. Note that the submersion assumption is necessary: any smooth surjection $\pi \colon \reals \to \reals$ which is constant on an interval fails to be a $0$-foliation. The canonical metric foliation is given by the orthogonal projection map
$$
P_V \colon \reals^n \to V,
$$
where $V \subeq \reals^n$ is a subspace; this defines an $(n-\dim V)$-foliation. As mentioned in the introduction, the distortion of dimension of leaves of these standard foliations by Sobolev mappings has been extensively studied in \cite{BMT}. In particular, Theorem~\ref{foliation} is a generalization of \cite[Theorem~1.3]{BMT}.

\subsection{Foliations of Sierpi\'nski carpets}

We define a compact subset of $[0,1]^2$ that is homeomorphic to the standard Sierpi\'nski carpet as follows. Let $\mbf{a}=\{a_n\}_{n \in \nats}$ be a sequence of odd integers greater than or equal to three. Divide $[0,1]^2$ into $a_1^2$ squares of side-length $a_1\inv$, and remove the open central square. Repeat this process on each remaining square, removing the central square of side length $(a_1a_2)\inv$, and continue in this fashion \emph{ad infinitum}. If
\begin{equation}\label{converge}
\sum_{n \in \nats} a_n^{-2} <\infty,
\end{equation}
the resulting subset $\mathcal{S}_{\mbf{a}}$ of $[0,1]^2$ is Ahlfors $2$-regular and supports a $p$-Poincar\'e inequality for every $p>1$ \cite{Carpets}.  The restriction $\pi_{\mathcal{S}_{\mbf{a}}}$ of the orthogonal projection $\pi \colon \reals^2 \to \reals \times \{0\}$ to $\mathcal{S}_{\mbf{a}}$ defines a $1$-foliation. Note that in this case the typical leaf of the foliation is a Cantor set of positive length (although some leaves are finite unions of closed intervals). Applying Theorem \ref{foliation} to this example results in the following statement, in which the estimates are the same as in the case of the standard Euclidean projection in $\R^2$.

\begin{corollary}\label{carpet cor}
Suppose that $\mbf{a}$ satisfies \eqref{converge}. Let $p>2$ and $\alpha \in \left(1,\frac{p}{p-1}\right\rbrack$. If $f \colon \mathcal{S}_{\mbf{a}} \to Y$ is a continuous mapping with an upper gradient in $L^p(\mathcal{S}_{\mbf{a}})$. Then
$$\dim \{a \in [0,1] : \dim f(\pi_{\mathcal{S}_{\mbf{a}}}\inv(a))\} \geq \alpha\} \leq 1-p\left(1 - \frac{1}{\alpha}\right).$$
\end{corollary}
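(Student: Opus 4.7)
The plan is to realize Corollary \ref{carpet cor} as a direct instance of Theorem \ref{foliation} with $X = \mathcal{S}_{\mbf{a}}$, $W = [0,1]$, $\pi = \pi_{\mathcal{S}_{\mbf{a}}}$, $Q = 2$, and $s = 1$. For these parameters the admissible range $(s,\,ps/(p-Q+s)]$ of Theorem \ref{foliation} coincides with the stated $(1,\,p/(p-1)]$, and a direct computation gives
\[
(Q-s) - p\left(1 - \tfrac{s}{\alpha}\right) \;=\; 1 - p\left(1 - \tfrac{1}{\alpha}\right),
\]
which is precisely the bound in the conclusion. Once the hypotheses of Theorem \ref{foliation} are verified in this setting, the corollary is then immediate.

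Three of the four standing hypotheses come essentially for free. The space $\mathcal{S}_{\mbf{a}}$, equipped with the restricted Euclidean metric and $\Hdim^2$, is proper because it is compact. Local $2$-homogeneity is an immediate consequence of the Ahlfors $2$-regularity of $(\mathcal{S}_{\mbf{a}},\Hdim^2)$, which is established in \cite{Carpets} precisely under the hypothesis \eqref{converge}. The local $2$-Poincar\'e inequality is the $p = 2$ case of the $p$-Poincar\'e inequality (valid for every $p>1$) proved in the same reference.

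The main point requiring actual argument is that $(\mathcal{S}_{\mbf{a}}, [0,1], \pi_{\mathcal{S}_{\mbf{a}}})$ is a $1$-foliation, i.e., that $\pi_{\mathcal{S}_{\mbf{a}}}$ is locally David--Semmes $1$-regular in the sense of Definition \ref{regular}. The Lipschitz clause is immediate since orthogonal projection is $1$-Lipschitz. For the covering clause, I would fix a ball $B \subeq [0,1]$ of radius $r$ and observe that $\pi_{\mathcal{S}_{\mbf{a}}}^{-1}(B)$ lies in a vertical strip $S_r$ of width $2r$ inside $[0,1]^2$. Apply the $5B$-covering theorem to the cover of $\pi_{\mathcal{S}_{\mbf{a}}}^{-1}(B)$ by balls of radius $r$ centered at its own points; the resulting disjoint subcollection $\{B(x_i, r)\}$ is contained in a strip of width $O(r)$. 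By the lower Ahlfors bound, each $B(x_i,r)\cap \mathcal{S}_{\mbf{a}}$ carries $\Hdim^2$-mass at least $c r^2$, while the upper Ahlfors bound, applied to a cover of the enlarged strip by $O(1/r)$ balls of radius $O(r)$ centered on $\mathcal{S}_{\mbf{a}}$, shows that the strip intersected with $\mathcal{S}_{\mbf{a}}$ has total $\Hdim^2$-mass $\lesssim r$. Disjointness then forces the subcollection to have cardinality $\lesssim r^{-1}$, giving the $s=1$ regularity bound.

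This measure-theoretic pigeonhole estimate along thin strips is the one genuine step of work; its only real subtlety is making sure that the constants in the two Ahlfors bounds interact correctly with the $5B$-covering argument to produce an upper bound of the form $Cr^{-1}$. Once that is settled, Theorem \ref{foliation} applies directly in the substitution described above and delivers Corollary \ref{carpet cor}.
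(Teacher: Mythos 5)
Your proposal is correct and follows essentially the same route as the paper: Corollary \ref{carpet cor} is obtained by specializing Theorem \ref{foliation} with $Q=2$, $s=1$, $X=\mathcal{S}_{\mbf{a}}$, $W=[0,1]$, $\pi=\pi_{\mathcal{S}_{\mbf{a}}}$, with the local homogeneity and $2$-Poincar\'e hypotheses coming from the Ahlfors $2$-regularity and Poincar\'e results of \cite{Carpets}. The one place you go beyond what the paper writes is in actually verifying that $\pi_{\mathcal{S}_{\mbf{a}}}$ is a $1$-foliation (the paper simply asserts this); your packing argument --- extract a disjoint family of $r$-balls via the $5B$-covering theorem, bound each from below by $cr^2$ via lower Ahlfors regularity, and bound the mass of the width-$O(r)$ strip from above by $\lesssim r$ via upper Ahlfors regularity --- is sound, with only the minor bookkeeping needed that the balls used to cover the strip must have centers on (or near) $\mathcal{S}_{\mbf{a}}$ so that upper regularity applies, which is easily arranged.
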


Corollary \ref{carpet cor} can also be derived by extending each supercritical Sobolev mapping on $\mathcal{S}_{\mbf{a}}$ to a mapping defined on all of $\reals^2$ of the same regularity, and then applying the results of \cite{BMT}. This is possible as $\mathcal{S}_{\mbf{a}}$ supports a Poincar\'e inequality and has positive two-dimensional measure. Our direct method seems to be simpler.

\subsection{Foliations of the Heisenberg group by left cosets of homogeneous subgroups}\label{Heis foliation}

We describe several natural foliations in the Heisenberg group. These foliations play a starring role in our subsequent paper \cite{DimDistHeis}.

The $n$th Heisenberg group $\Heis^n$, $n \in \nats$, is the unique step two nilpotent stratified Lie group with topological dimension $2n+1$ and one dimensional center. We denote $\Heis^1=\Heis$. Denoting points in $\Heis^n$ by $(x,t) \in \reals^{2n} \times \reals$, the group law is given by
$$(x,t) * (x',t') = \left(x+x',t+t' + 2\omega(x,x')\right),$$
where $\omega(x,x') = \sum_{i=1}^n (x_{n+i}x_i' - x_ix_{n+i}')$ denotes the standard symplectic form on $\reals^{2n}$. The group $\Heis^n$ is equipped with a left-invariant metric $d_\Heis(p,q) = ||p^{-1}*q||_{\Heis}$ via the \emph{Kor\'anyi norm}
$$
||(x,t)||_{\Heis} =( ||x||_{\reals^{2n}}^4 + t^2)^{1/4}.
$$
The metric space $(\Heis^n,d_\Heis)$ is proper and Ahlfors $(2n+2)$-regular when equipped with its Haar measure (which agrees up to constants with both the Lebesgue measure in the underlying Euclidean space $\R^{2n+1}$ and the $(2n+2)$-dimensional Hausdorff measure in the Kor\'anyi metric $d_\Heis$). It is known that $(\Heis^n,d_{\Heis},\Hdim^{2n+2})$ supports a $p$-Poincar\'e inequality for every $1\le p<\infty$; see \cite[Chapter 11]{SobMet} and the references therein.

The Heisenberg group $\Heis^n$ admits a one-parameter family of \emph{intrinsic dilations} $\del_r(x,t)=(rx,r^2t)$, $r>0$. These dilations commute with the group law and are homogeneous of order one with respect to the Kor\'anyi norm, i.e., 
$$\del_r(p)*\del_r(q)=\del_r(p*q) \ \text{and}\ ||\del_r(p)||_\Heis = r ||p||_\Heis.$$

A subgroup of $\Heis^n$ is \emph{homogeneous} if it is invariant under intrinsic dilations. Homogeneous subgroups come in two types. A homogeneous subgroup is called \emph{horizontal} if it is of the form $V \times\{0\}$ for an isotropic subspace $V$ of the symplectic space $\reals^{2n}$. (Recall that $V$ is {\it isotropic} if $\omega|_V=0$.) It is easy to see that every homogeneous subgroup that is not horizontal contains the $t$-axis. The latter subgroups are called \emph{vertical}. Any horizontal subgroup $\V = V \times \{0\}$ defines a semidirect decomposition $\Heis^n = \V^\perp \ltimes \V$ where $\V^\perp = V^\perp \times \reals$ is the \emph{vertical complement} of $\V$; here $V^\perp$ denotes the usual orthogonal complement of $V$ in $\reals^{2n}$.

Since $\omega$ vanishes on isotropic subgroups, the restriction of the Kor\'anyi metric to horizontal homogeneous subgroups coincides with the Euclidean metric. Consequently,
$$
\dim_{\Heis^n} \V = \dim_{\R^{2n+1}} \V = \dim V
$$
for each horizontal homogeneous subgroup; we write $\dim \V$ without any subscript in this case. On the other hand,
$$
\dim_{\Heis^n} \V^\perp = \dim_{\R^{2n+1}} \V^\perp + 1 = \dim V^\perp + 2 = (2n+2)-\dim V.
$$
For example, when $n=1$ we have $\dim \V = 1$ and $\dim_{\Heis} \V^\perp = 3$ for every horizontal line $\V \subset \Heis$.

The semidirect decomposition $\Heis^n = \V^\perp \ltimes \V$ defines maps
$$\pi_{\V} \colon \Heis^n \to \V \ \text{and}\ \pi_{{\V}^\perp} \colon \Heis^n \to ({\V}^\perp,d_\Heis)
$$
by the formulas $\pi_{\V}(p) = p_{\V}$ and $\pi_{\V^\perp}(p) = p_{\V^\perp}$, where $p =
p_{\V^\perp} * p_{\V}$. It is easy to see that $\pi_{\V}$ is Lipschitz on compact sets. However, $\pi_{\V^\perp}$ is not Lipschitz on compact sets (it is at best $\tfrac12$-H\"older). Further information about the metric and measure-theoretic properties of these projection mappings can be found in \cite{balogh-faessler-mattila-tyson2012}.

\begin{proposition}\label{vertical leaves}
The triple $(\Heis^n,\V,\pi_{\V})$ is a $(\dim_{\Heis^n} \V^\perp)$-foliation.
\end{proposition}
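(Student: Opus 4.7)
The plan is to verify Definition~\ref{regular} for $\pi_\V$ with $s = \dim_{\Heis^n}\V^\perp = (2n+2)-\dim V$. The Lipschitz continuity of $\pi_\V$ on compact sets has already been noted in the preceding discussion, so only the covering condition remains. The key idea is to avoid a direct combinatorial grid construction in $\V^\perp$ — the natural approach of translating a grid by $a$ is obstructed by the fact that conjugation $a\inv * u * a$ introduces a shift in the $t$-coordinate of magnitude $\sim \|a\|_\Heis \|u\|_\Heis$, which distorts Heisenberg distances by a factor of $r^{1/2}$ and is therefore too weak. Instead, I will use a volumetric argument that combines Ahlfors $(2n+2)$-regularity of $\Heis^n$ with the product structure of Haar measure.

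Fix a compact set $K \subeq \Heis^n$, let $K_1 = \pi_{\V^\perp}(K) \subeq \V^\perp$ and $K_2 = \pi_\V(K) \subeq \V$; these are compact by continuity of the projections, and the semidirect decomposition yields $K \subeq K_1 * K_2$. The map $\Psi \colon \V^\perp \times \V \to \Heis^n$, $\Psi(u,v) = u * v$, is a bijection whose Jacobian in exponential coordinates is readily computed to be one (for $n=1$ it is the determinant of $\bigl(\begin{smallmatrix}1&0&0\\0&1&0\\2y_2 & 2x_1 & 1\end{smallmatrix}\bigr)$, and the general case is analogous). Therefore the Haar measure on $\Heis^n$ factors as the product of Haar measures on $\V^\perp$ and $\V$ under the identification $\Psi$. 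Using uniqueness of the decomposition, for any ball $B = B_\V(a,r) \subeq \V$ one has
$$\pi_\V\inv(B) \cap (K_1 * K_2) = K_1 * (B \cap K_2),$$
so that
$$\mu_\Heis(\pi_\V\inv(B) \cap K) \leq \mu_{\V^\perp}(K_1) \cdot \mu_\V(B) \lesssim r^{\dim V},$$
because $K_1$ is compact and $\V$ is isometric to Euclidean $\reals^{\dim V}$.

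To extract the covering, set $A = \pi_\V\inv(B) \cap K$ and choose a maximal $r$-separated subset $\{p_1,\ldots,p_N\}$ of $A$. The balls $B_\Heis(p_i, r/2)$ are pairwise disjoint, and by local Ahlfors $(2n+2)$-regularity of $\Heis^n$ each has measure at least $cr^{2n+2}$ provided $r$ is smaller than some $r_0$ depending on $K$. All these balls lie in the $(r/2)$-neighborhood $\nbhd(A, r/2)$, which, using the Lipschitz property of $\pi_\V$ on a compact neighborhood of $K$ with constant $L$, is contained in $\pi_\V\inv(B_\V(a,(1+L/2)r)) \cap \nbhd(K,r/2)$. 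The volume estimate of the previous paragraph applied to this enlarged set yields $\mu_\Heis(\nbhd(A,r/2)) \lesssim r^{\dim V}$. Consequently
$$N \cdot r^{2n+2} \lesssim r^{\dim V}, \quad \text{so that} \quad N \lesssim r^{\dim V - (2n+2)} = r^{-s}.$$
By maximality of $\{p_i\}$, the balls $\{B_\Heis(p_i,r)\}_{i=1}^{N}$ cover $A$, which verifies Definition~\ref{regular} with $C$ independent of $a$ and $r$.

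The main (minor) obstacle is checking the factorization of Haar measure across the semidirect decomposition; once this is in place, the argument is a clean combination of a volume bound and Ahlfors regularity, and completely bypasses the metric distortion produced by conjugation in $\Heis^n$. The same strategy would work for any locally Ahlfors $Q$-regular Carnot group equipped with a semidirect decomposition whose horizontal factor is Euclidean.
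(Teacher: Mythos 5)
Your proof is correct and follows essentially the same route as the paper's: the paper likewise reduces the covering condition to the fact that $\pi_{\V}\inv(B_{\V}(a,r))\cap K$ has Haar measure $\lesssim r^{\dim V}$ and then appeals to ``volume considerations'' together with Ahlfors $(2n+2)$-regularity of $\Heis^n$, which is exactly the maximal $r$-separated set count you carry out explicitly. The only difference is in how the volume bound is obtained: the paper notes that in coordinates $\pi_{\V}$ is the Euclidean orthogonal projection $P_V$, so the preimage of the ball is the slab $P_V\inv\bigl(B_{\reals^{2n}}(a_V,r)\cap V\bigr)\times\reals$, whereas you derive it from the unimodular factorization of Haar measure across the semidirect decomposition $\Heis^n=\V^\perp\ltimes\V$; both yield the same estimate, yours being marginally more general, the paper's marginally more elementary.
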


\begin{proof}
As already noted, $\pi_\V:\Heis^n\to\V$ is Lipschitz on compact sets. Consider a point $a=(a_V,0) \in \V$ and a radius $r>0$.  Then
$$B_{\V}(a,r) = B_{\Heis}(a,r) \cap \V = \left(B_{\reals^{2n}}(a_V,r) \cap V \right) \times \{0\}.$$
Moreover, as sets,
$$\pi_{\V}\inv(B_{\V}(a,r)) = P_V\inv(B_{\reals^{2n}}(a_V,r) \cap V)  \times \reals,$$
where $P_V \colon \reals^{2n} \to V$ is the standard Euclidean orthogonal projection onto $V$. It follows by volume considerations that for each compact set $K \subeq \Heis^n$, there is a constant $C \geq 1$ depending only on $K$, such that $\pi_{\V}\inv(B_{\V}(a,r)) \cap K$ may be covered by at most $Cr^{-\dim \V^\perp}$ Heisenberg balls of radius~$r$.
\end{proof}

Applying Theorem \ref{foliation} to $\pi_{\V}$ yields the following statement on dimension increase for cosets of a vertical complementary subgroup.

\begin{corollary}\label{Heis lipschitz}
Let $\V$ be a horizontal homogeneous subgroup of $\Heis^n$ and let $Y$ be an arbitrary metric space. Given a continuous mapping $f \colon \Heis^n \to Y$ with upper gradient in $\Lloc^p(\Heis^n)$ for some $p > 2n+2$, and given
$$
\alpha \in \left( \dim_{\Heis^n} \V^\perp, \frac{p\dim_{\Heis^n} \V^\perp}{p-\dim \V}\right\rbrack,
$$
we have the estimate
$$
\dim\{a \in \V: \dim (f(\V^\perp*a)) \geq \alpha\} \leq \dim \V - p\left(1-\frac{\dim_{\Heis^n} \V^\perp}{\alpha}\right).
$$
\end{corollary}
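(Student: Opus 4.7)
The plan is to apply Theorem \ref{foliation} directly to the triple $(\Heis^n, \V, \pi_\V)$ and then translate the conclusion into the coset language of the corollary. All the hypotheses of Theorem \ref{foliation} are already available: $\Heis^n$ is proper and locally Ahlfors $Q$-regular with $Q = 2n+2$, hence locally $Q$-homogeneous; it supports a local $p$-Poincar\'e inequality for every $p \geq 1$, and in particular for $p = Q$; and Proposition \ref{vertical leaves} shows that $(\Heis^n, \V, \pi_\V)$ is an $s$-foliation with $s = \dim_{\Heis^n} \V^\perp$.

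To identify the leaves, I would use the semidirect decomposition $\Heis^n = \V^\perp \ltimes \V$. By definition, $\pi_\V(p) = p_{\V}$ where $p = p_{\V^\perp} * p_{\V}$, and one checks immediately that $\pi_\V^{-1}(a) = \V^\perp * a$ for every $a \in \V$. Applying Theorem \ref{foliation} to this foliation with the given exponent $p > Q$ then produces
$$\dim\{a \in \V : \dim f(\V^\perp * a) \geq \alpha\} \leq (Q-s) - p\left(1 - \frac{s}{\alpha}\right)$$
for every $\alpha \in \left(s, \tfrac{ps}{p-Q+s}\right\rbrack$.

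It remains to match the numerical expressions with those appearing in the statement. From the formulas discussed above,
$$\dim_{\Heis^n} \V^\perp = \dim V^\perp + 2 = (2n - \dim V) + 2 = Q - \dim \V,$$
so $Q - s = \dim \V$ and $p - Q + s = p - \dim \V$. Substituting these identities into both the upper bound on the dimension and the admissible range for $\alpha$ yields precisely the statement of the corollary.

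There is no real obstacle in this proof: it is a direct bookkeeping exercise that applies Theorem \ref{foliation} through the foliation supplied by Proposition \ref{vertical leaves}. The only care needed is to keep straight the distinction between the Euclidean dimension $\dim \V$ of the horizontal factor and the Heisenberg metric dimension $\dim_{\Heis^n} \V^\perp = Q - \dim \V$ of its vertical complement, since these are the quantities that appear on opposite sides of the identities relating the parameters of the foliation to the parameters in the statement.
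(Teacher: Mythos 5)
Your proposal is correct and follows the paper's own route: the corollary is obtained exactly by applying Theorem \ref{foliation} to the $(\dim_{\Heis^n}\V^\perp)$-foliation $(\Heis^n,\V,\pi_\V)$ furnished by Proposition \ref{vertical leaves}, identifying the leaves $\pi_\V^{-1}(a)=\V^\perp * a$, and substituting $Q-s=\dim\V$ and $p-Q+s=p-\dim\V$. The bookkeeping in your last paragraph matches the paper's computation of $\dim_{\Heis^n}\V^\perp=(2n+2)-\dim\V$, so nothing is missing.
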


For instance, for any horizontal line $\V$ in $\Heis^1$, any continuous map $f \colon \Heis^1 \to Y$ with upper gradient in $\Lloc^p(\Heis^1)$ for $p > 4$, and any $\alpha \in (3, \frac{3p}{p-1}]$, we have the estimate
\begin{equation}\label{Heis lipschitz estimate}
\dim\{a \in \V: \dim (f(\V^\perp*a)) \geq \alpha \} \leq 1 - p\left(1-\frac{3}{\alpha}\right).
\end{equation}
Note that the upper bound in \eqref{Heis lipschitz estimate} is identical to the one obtained in the classical Euclidean setting for the foliation of $\R^4$ by a one-dimensional family of parallel hyperplanes.

In contrast to $\V$, the restriction of the Kor\'anyi metric to a complementary homogeneous vertical subgroup ${\V}^\perp$ differs dramatically from the restriction of the Euclidean metric. Moreover, as mentioned above, the map $\pi_{{\V}^\perp}$ fails to be Lipschitz on compact sets and can increase the Hausdorff dimension of sets. Thus $\pi_{{\V}^\perp}:\Heis^n \to (\V^\perp,d_{\Heis^n})$ is {\bf not} locally David--Semmes regular. To overcome this difficulty, we alter the choice of metric on $\V^\perp$.

\begin{proposition}\label{horizontal leaves}
The triple $(\Heis^n,(\V^\perp,d_{\reals^{2n+1}}),\pi_{{\V}^\perp})$ is a $(\dim \V+ 1)$-foliation of $\Heis^n$.
\end{proposition}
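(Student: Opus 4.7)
The plan is to verify the two requirements of Definition~\ref{regular}: that $\pi_{\V^\perp}$ is Lipschitz on compact subsets of $\Heis^n$ when the target is equipped with the Euclidean metric $d_{\R^{2n+1}}$ on $\V^\perp$, and that truncated preimages of Euclidean balls admit Heisenberg-ball coverings of the right size. As a preliminary, I would solve $p = p_{\V^\perp}*p_\V$ using the orthogonal decomposition $x = x_{V^\perp} + x_V$ in $\R^{2n}$ to extract the explicit formula
$$\pi_{\V^\perp}(x,t) = \bigl(x_{V^\perp},\ t - 2\omega(x_{V^\perp}, x_V)\bigr).$$

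For the Lipschitz estimate on a compact set $K$, I would use that $p^{-1}*p' = (x'-x,\, t'-t-2\omega(x,x'))$, so the Kor\'anyi inequality delivers $|x-x'| \le d_\Heis(p,p')$ together with $|(t-t') + 2\omega(x,x')| \le d_\Heis(p,p')^2$. The first coordinate of $\pi_{\V^\perp}(p)-\pi_{\V^\perp}(p')$ is immediately bounded by $|x-x'|$. For the scalar coordinate, I would split it into $(t-t')$ minus $2[\omega(x_{V^\perp},x_V) - \omega(x'_{V^\perp},x'_V)]$ and bound the two pieces separately: antisymmetry of $\omega$ yields $\omega(x,x') = \omega(x, x'-x)$, so
$$|t-t'| \le |(t-t') + 2\omega(x,x')| + 2|\omega(x,x'-x)| \le d_\Heis(p,p')^2 + C_K d_\Heis(p,p'),$$
while bilinearity combined with the boundedness of the components on $K$ gives
$$|\omega(x_{V^\perp},x_V) - \omega(x'_{V^\perp},x'_V)| \le |\omega(x_{V^\perp}-x'_{V^\perp}, x_V)| + |\omega(x'_{V^\perp}, x_V-x'_V)| \le C_K |x-x'|.$$

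For the covering estimate, I would use the diffeomorphism $\Phi \colon V^\perp \times \R \times V \to \Heis^n$ defined by $\Phi(y,s,v) = (y,s)*(v,0) = (y+v,\, s + 2\omega(y,v))$, which has unit Jacobian and satisfies $\pi_{\V^\perp}\circ \Phi(y,s,v) = (y,s)$. Hence Haar measure on $\Heis^n$ transports to Lebesgue measure in the $(y,s,v)$-coordinates. In these coordinates, the preimage $\pi_{\V^\perp}^{-1}(B_{\R^{2n+1}}(a,r)) \cap K$ is parametrized by $(y,s) \in B_{\R^{2n+1}}(a,r) \cap \V^\perp$ and $v \in V$ subject to $\Phi(y,s,v) \in K$; since $|y|$ is bounded and $y+v$ lies in the projection of $K$ to $\R^{2n}$, the variable $v$ is forced into a bounded subset of $V$ depending only on $K$. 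Consequently
$$\mu\bigl(\pi_{\V^\perp}^{-1}(B_{\R^{2n+1}}(a,r)) \cap K\bigr) \le C_K\, r^{\dim V^\perp + 1} = C_K\, r^{(2n+2)-(\dim V+1)},$$
and Ahlfors $(2n+2)$-regularity of $\Heis^n$ combined with a standard greedy packing argument converts this measure bound into a cover by $\lesssim r^{-(\dim V+1)}$ Heisenberg balls of radius $\sim r$.

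The main obstacle is the linear control of the scalar coordinate $(t-t') - 2[\omega(x_{V^\perp},x_V) - \omega(x'_{V^\perp},x'_V)]$ by $d_\Heis(p,p')$: the Kor\'anyi inequality itself delivers only the quadratic bound $|(t-t') + 2\omega(x,x')| \lesssim d_\Heis(p,p')^2$, which is insufficient on its own. Antisymmetry of $\omega$ converts this quadratic correction into the first-order expression $\omega(x, x'-x)$ at the cost of a factor $|x|$ that is harmlessly bounded on $K$, and it is precisely this cancellation that makes the Euclidean (rather than the sub-Riemannian parabolic) metric on $\V^\perp$ the correct target for the proposition.
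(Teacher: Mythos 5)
Your argument is correct, and it takes a genuinely different route from the paper. For the Lipschitz estimate, the paper just notes that the identity map $\Heis^n \to \R^{2n+1}$ is Lipschitz on compacts and that $\pi_{\V^\perp}$ is a smooth (hence Euclidean-locally-Lipschitz) map, so the claim is immediate; you instead compute directly from the Kor\'anyi inequality, and the antisymmetry trick $\omega(x,x') = \omega(x,x'-x)$ you highlight is precisely the cancellation that makes the composition work. For the covering estimate, the paper writes $\pi_{\V^\perp} = P_{\V^\perp}\circ\phi$ for a smooth diffeomorphism $\phi$, covers the resulting Euclidean slab by $\sim r^{-\dim V}$ Euclidean $r$-balls, and then invokes the Ball--Box theorem to trade each Euclidean $r$-ball for $\sim r^{-1}$ Kor\'anyi $r$-balls; you instead pass through a unit-Jacobian change of variables $\Phi(y,s,v) = (y,s)*(v,0)$ to obtain the Lebesgue volume bound $\mu(\pi_{\V^\perp}^{-1}(B)\cap K) \lesssim r^{(2n+2)-(\dim\V+1)}$ and then use Ahlfors $(2n+2)$-regularity and a greedy packing to extract a cover. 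Your approach is in the spirit of the paper's own proof of Proposition~\ref{Heis-Grushin-foliation} (where a volume argument is also used), and it sidesteps the Ball--Box theorem entirely, which is a minor simplification. One small point worth stating explicitly: the greedy packing step requires a bound not just on $\mu(A)$ for $A = \pi_{\V^\perp}^{-1}(B)\cap K$, but on $\mu(\nbhd_\Heis(A,r))$, since the disjoint $r/2$-balls centered in a maximal $r$-separated subset of $A$ live in a neighborhood of $A$ rather than in $A$ itself. This follows readily from the Lipschitz estimate you already proved (the $\Heis$-$r$-neighborhood of $A$ sits inside $\pi_{\V^\perp}^{-1}(B(a,Cr))\cap K'$ for a slightly larger compact $K'$), but it deserves a sentence.
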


\begin{proof}
The fact that $\pi_{\V^\perp}$ is Lipschitz on compact sets follows from the fact that the identity map from $\Heis^n$ to $\reals^{2n+1}$ is Lipschitz on compact sets. Moreover, there exists a smooth diffeomorphism $\phi \colon \reals^{2n+1} \to \reals^{2n+1}$ such that  $\pi_{{\V}^\perp} = P_{{\V}^\perp} \circ \phi$, where $P_{{\V}^\perp}$ denotes the Euclidean orthogonal projection onto ${\V}^\perp$. Hence, given $a \in {{\V}^\perp}$, $r>0$, and a compact set $K \subeq \Heis^n$, there is a constant $C \geq 1$, depending only on $K$, such that the set $\pi_{{\V}^\perp}\inv(B_{\reals^{2n+1}}(a,r)) \cap K$ can be covered by at most  $C r^{-\dim \V}$ Euclidean balls of radius $r$. It follows by an application of the Ball-Box Theorem that there is another constant $C' \geq 1$, depending only on $K$, such that $\pi_{{\V}^\perp}\inv(B_{\reals^{2n+1}}(a,r)) \cap K$ can be covered by $C'r^{-(\dim \V+1)}$ balls in the Kor\'anyi metric $d_{\Heis^n}$.
\end{proof}

Observe that the Hausdorff dimension of each leaf of the foliation defined by $\pi_{{\V}^\perp}$ is equal to $\dim \V$ and not $(\dim \V+1)$. As discussed in Remark \ref{limitations}, in this situation we do not expect a particularly good estimate to arise from Theorem \ref{foliation}. Nevertheless we record the following corollary.

\begin{corollary}\label{Heis not Lipschitz}
Let $\V$ be a horizontal homogeneous subgroup of $\Heis^n$ and let $Y$ be an arbitrary metric space. Given a continuous mapping $f \colon \Heis^n \to Y$ with upper gradient in $\Lloc^p(\Heis^n)$ for some $p > 2n+2$, and given
$$
\alpha \in \left( \dim \V+1, \frac{p(\dim \V +1)}{p-\dim_{\R^{2n+1}} \V^\perp} \right\rbrack,
$$
we have the estimate
\begin{equation}\label{Heis not Lipschitz estimate}
\dim_{\reals^{2n+1}}\{a \in \V^\perp: \dim(f(a*\V)) \geq \alpha \} \leq \dim_{\R^{2n+1}} \V^\perp - p\left(1-\frac{\dim \V +1}{\alpha}\right).
\end{equation}
\end{corollary}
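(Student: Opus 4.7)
The proof will be a direct application of Theorem~\ref{foliation} to the metric foliation constructed in Proposition~\ref{horizontal leaves}, together with the arithmetic identification of the parameters. First I would recall the ambient structure: the Heisenberg group $\Heis^n$, endowed with its Haar measure and the Kor\'anyi distance, is locally Ahlfors $(2n+2)$-regular (hence locally $(2n+2)$-homogeneous) and supports a local $p$-Poincar\'e inequality for every $p \geq 1$, in particular for $p=2n+2$. These are the hypotheses needed to invoke Theorem~\ref{foliation}.

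Next I would identify the foliation. By Proposition~\ref{horizontal leaves}, the triple $(\Heis^n,(\V^\perp,d_{\reals^{2n+1}}),\pi_{\V^\perp})$ is a metric foliation with parameter $s = \dim \V + 1$. Thus with $Q = 2n+2$ and $s = \dim \V + 1$, we have
$$
Q-s = (2n+2) - (\dim \V + 1) = (2n+1) - \dim \V = \dim_{\reals^{2n+1}} \V^\perp,
$$
and the upper endpoint of the admissible $\alpha$-range in Theorem~\ref{foliation} becomes
$$
\frac{ps}{p - Q + s} = \frac{p(\dim \V + 1)}{p - \dim_{\reals^{2n+1}} \V^\perp},
$$
matching exactly the hypothesis imposed on $\alpha$ in the statement.

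I would then identify the leaves of this foliation with the left cosets $a * \V$. Since the semidirect decomposition $\Heis^n = \V^\perp \ltimes \V$ gives every $p \in \Heis^n$ the unique factorization $p = p_{\V^\perp} * p_{\V}$, for $a \in \V^\perp$ we have
$$
\pi_{\V^\perp}^{-1}(a) = \{a * v : v \in \V\} = a * \V,
$$
so the set appearing in Theorem~\ref{foliation} coincides with the one on the left-hand side of the claimed estimate (with the parameterizing space $\V^\perp$ carrying its Euclidean metric, which is why the dimension is measured as $\dim_{\reals^{2n+1}}$).

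Finally, applying Theorem~\ref{foliation} produces
$$
\dim_{\reals^{2n+1}}\{a \in \V^\perp : \dim f(a * \V) \geq \alpha\} \leq (Q - s) - p\left(1 - \frac{s}{\alpha}\right) = \dim_{\reals^{2n+1}} \V^\perp - p\left(1 - \frac{\dim \V + 1}{\alpha}\right),
$$
which is the desired conclusion. There is no real obstacle here: all the work was done in Proposition~\ref{horizontal leaves} (whose key step was using a smooth diffeomorphism to pass between $\pi_{\V^\perp}$ and Euclidean orthogonal projection, and then invoking the Ball--Box Theorem to control Euclidean covers by Kor\'anyi covers), and in the general framework of Theorem~\ref{foliation}. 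The only mild subtlety worth emphasizing is that the foliation is genuinely parameterized by $(\V^\perp,d_{\reals^{2n+1}})$ rather than by $(\V^\perp,d_{\Heis})$, since the latter choice would fail the David--Semmes regularity condition.
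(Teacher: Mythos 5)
Your proof is correct and matches the paper's (implicit) derivation: the corollary is stated immediately after Proposition~\ref{horizontal leaves} precisely so that it follows by plugging $Q=2n+2$, $s=\dim\V+1$, $W=(\V^\perp,d_{\reals^{2n+1}})$, and $\pi=\pi_{\V^\perp}$ into Theorem~\ref{foliation}, and your arithmetic checks ($Q-s=\dim_{\reals^{2n+1}}\V^\perp$, identification of the $\alpha$-range, and $\pi_{\V^\perp}^{-1}(a)=a*\V$) are exactly the verifications required. You also correctly flag the key subtlety — that the Euclidean rather than Kor\'anyi metric must be used on $\V^\perp$ for the David--Semmes condition to hold — which the paper emphasizes as well.
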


Above we use the notation $\dim_{\reals^{2n+1}}$ to emphasize that we consider the Hausdorff dimension of the set equipped with the Euclidean metric.

By an application of the Dimension Comparison Theorem \cite{BTW} we deduce from \eqref{Heis not Lipschitz estimate} the following estimate
\begin{equation}\label{Heis not Lipschitz estimate 2}
\dim_{\Heis^n}\{a \in \V^\perp: \dim (f(a*\V)) \geq \alpha \} \leq \dim_{\Heis^n} \V^\perp - p\left(1-\frac{\dim \V +1}{\alpha}\right).
\end{equation}
For example, when $n=1$ estimate \eqref{Heis not Lipschitz estimate 2} reads
%$$
%\dim_{\reals^3}\{a \in \V^\perp: \Hdim^\alpha(f(a*\V)) >0\} \leq 2 - p\left(1-\frac{2}{\alpha}\right).
%$$
%and
$$
\dim_{\Heis^1}\{a \in \V^\perp: \dim(f(a*\V)) \geq \alpha \} \leq 3 - p\left(1-\frac{2}{\alpha}\right).
$$
%respectively.

We now reiterate the ways in which the estimate in Corollary \ref{Heis not Lipschitz} is deficient. As mentioned above, for each $a \in \V^\perp$, the Heisenberg metric on the leaf $a*\V$ coincides with the restriction of the Euclidean metric, and the resulting space is Ahlfors $(\dim\V)$-regular. Hence Theorem~\ref{Q-regular} implies that given $f$ as in Corollary \ref{Heis not Lipschitz},
\begin{equation}\label{p-3}
\left\{a \in \V^{\perp} : \Hdim^{\frac{p\dim \V}{(p-\dim_{\Heis^n}\V^\perp)}}(f(a*\V)) >0\right\} = \emptyset.
\end{equation}
However, applying Corollary \ref{Heis not Lipschitz} with $\alpha  = \frac{p\dim \V}{(p-\dim_{\Heis^n}\V^\perp)}$ yields only
$$
\dim_{\reals^{2n+1}}\left(\left\{a \in \V^{\perp} : \Hdim^{\frac{p\dim \V}{(p-\dim\V^\perp)}}(f(a*\V)) >0\right\}\right) < \frac{p-(2n+2)}{\dim\V},
$$
and the quantity on the right hand side is strictly greater than zero. Moreover, Theorem \ref{foliation} can provide no information of about the frequency with which a supercritical Sobolev mapping maps leaves onto sets of dimension at least $\alpha$ when $\alpha \in [\dim \V,\dim \V+1]$.

These deficiencies are addressed in \cite{DimDistHeis}, which presents a comprehensive study of dimension increase properties of Sobolev mappings of the Heisenberg group $\Heis^n$ on elements of such foliations.

\subsection{Foliations of $\Heis^1$ by right cosets of horizontal lines}

As a final example we specialize to the first Heisenberg group $\Heis$ and consider the foliation by {\it right} cosets of a horizontal line. As we shall see, this foliation is well behaved with respect to the underlying non-Riemannian geometry of both the Heisenberg group and the parameterizing space, and leads to good estimates for dimension increase arising from our main theorems.

We recall that the sub-Riemannian geometry of $\Heis$ is defined via the {\it horizontal distribution} $H\Heis$, the unique left-invariant rank two subbundle of the tangent bundle $T\Heis$ for which $H_e\Heis = \spa\{\tfrac{\partial}{\partial {x_1}},\tfrac{\partial}{\partial {x_2}}\}$, where $e=(0,0)$ denotes the identity element of $\Heis$.
We denote by $X_1$ and $X_2$ the left-invariant vector fields on $\Heis$ whose values at $e$ agree with $\tfrac{\partial}{\partial{x_1}}$ and $\tfrac{\partial}{\partial{x_2}}$ respectively; then $H_p\Heis = \spa\{(X_1)_p,(X_2)_p\}$.

A smooth curve $\gamma:[a,b]\to\Heis$ is {\it horizontal} if $\gamma'(s) \in H_{\gamma(s)}\Heis$ for all $s$. We define the {\it length} of $\gamma$ by declaring $X_1$ and $X_2$ to be an orthonormal frame in $H\Heis$, and we introduce the {\it Carnot-Carath\'eodory (CC) metric} $d_{cc}$ on $\Heis$ as the geodesic metric obtained by infimizing the lengths of horizontal curves joining two given points. It is well known that the Carnot-Carath\'eodory metric $d_{cc}$ and the Kor\'anyi metric $d_\Heis$ are comparable; this fact easily follows from the observation that both metrics are left invariant and homogeneous of order one with respect to intrinsic dilations.

Let us fix a horizontal line $\V$ in $\Heis$ and consider the semidirect decomposition $\Heis = \V \rtimes \V^\perp$; note that the normal subgroup $\V^\perp$ now appears on the right. The right cosets $\V*a$, $a \in \V^\perp$, are typically not horizontal curves (only in the case when $a$ lies in the center of $\Heis$, i.e., the $t$-axis, is $\V*a$ a horizontal line). We define a map
$$
\pi^R_{{\V}^\perp} \colon \Heis \to {\V}^\perp
$$
by the formula $\pi^R_{\V^\perp}(p) = p^R_{\V^\perp}$, where $p = p_\V * p^R_{\V^\perp}$. Identifying $\V^\perp$ with the collection $\cX = \{\V*a:a \in \V\}$ of right cosets of $\V$, the map $\pi^R_{{\V}^\perp}$ coincides with the quotient map $p\mapsto[p]$, where $[p]$ denotes the unique right coset $\V*a$ containing $p$.

The CC metric on $\Heis$ induces a well defined metric on $\cX$ by the formula $\dist_{cc}(\V*a,\V*a')$. Moreover, the left invariance of the CC metric implies that right cosets are {\it CC parallel}: $\dist_{cc}(\V*a,\V*a') = \dist_{cc}(x*a,\V*a')$ for any $x \in \V$.

The {\it Grushin plane} $G$ is the two-dimensional sub-Riemannian structure on $\reals^2$ defined by the horizontal distribution $HG$ given by $H_{(u,v)}G = \R^2$ if $u\ne 0$ and $H_{(0,v)}G = \R\times\{0\}$. A curve $\gamma$ in $G$ is {\it horizontal} if its tangent vectors lie everywhere in the horizontal distribution, i.e., if the second component of $\gamma'(s)_2$ is zero whenever the first component of $\gamma(s)$ is zero.

The Carnot-Carath\'eodory metric $d_{cc}$ on $G$ is defined as for the Heisenberg group:
$$
d_{cc}((u_1,v_1),(u_2,v_2)) = \inf_\gamma \int_a^b \sqrt{(u'(s))^2 + \frac{(v'(s))^2}{(u(s))^2}} \, ds,
$$
where the infimum is taken over all horizontal curves $\gamma=(u,v) :[a,b]\to G$ that connect $(u_1,v_1)$ to $(u_2,v_2)$. Formally, this corresponds to the choice of the orthonormal frame $\{U,V\} = \{\tfrac{\partial}{\partial u},u\tfrac{\partial}{\partial v}\}$ for $HG$, note however that $V_{(0,v)}=0$ for all $v\in\R$, so this is not a genuine frame.

The following fact is well known, see e.g.\ Arcozzi--Baldi \cite[Theorem 1]{ArcozziBaldi2008}. It is a specific instance of the celebrated Rothschild--Stein lifting theorem for families of H\"ormander vector fields \cite{RothschildStein1976}.

\begin{theorem}\label{X and G}
The space $\cX$ is isometric to the Grushin plane $(G,d_{cc})$.
\end{theorem}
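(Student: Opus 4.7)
The plan is to exhibit an explicit isometry $\Phi \co \cX \to G$ coming from a smooth ambient map $\phi \co \Heis \to \R^2$ that is constant on right cosets of $\V$ and that intertwines the two sub-Riemannian structures. Without loss of generality I would take $\V$ to be the $x_1$-axis, so that $\V^\perp$ is the $(x_2,t)$-plane; a direct computation with the Heisenberg group law yields the decomposition
\[
(x_1,x_2,t) = (x_1,0,0)*(0,x_2,t+2x_1x_2),
\]
which singles out the candidate
\[
\phi(x_1,x_2,t) = (u,v) := \bigl( x_2,\ \tfrac14(t+2x_1x_2) \bigr).
\]
First I would check by substitution that $\phi((r,0,0)*(x_1,x_2,t)) = \phi(x_1,x_2,t)$ for every $r \in \R$, so that $\phi$ descends to a well-defined map $\Phi \co \cX \to \R^2$, and verify that distinct elements of $\V^\perp$ yield distinct right cosets, so that $\Phi$ is a bijection onto $\R^2 = G$.

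The next step is to match the sub-Riemannian structures. Differentiating $\phi$ and applying $d\phi$ to the left-invariant horizontal frame $X_1 = \partial_{x_1}+2x_2\partial_t$ and $X_2 = \partial_{x_2}-2x_1\partial_t$, one computes
\[
d\phi(X_1) = u\,\partial_v = V \mand d\phi(X_2) = \partial_u = U,
\]
which is exactly the (degenerate) Grushin orthonormal frame. It follows that $\phi$ sends horizontal curves in $\Heis$ to horizontal curves in $G$ of the \emph{same} length, giving the inequality $d_{cc}^{G}(\Phi([p]),\Phi([q])) \leq d_\cX([p],[q])$ for every pair of cosets $[p],[q] \in \cX$, where $d_\cX$ denotes the quotient metric inherited from $d_{cc}$ on $\Heis$.

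For the reverse inequality I would lift Grushin horizontal curves back to $\Heis$. Given a Grushin horizontal curve $\tilde\gamma(s) = (u(s),v(s))$ on $[a,b]$, define the lift $\gamma(s) = (x_1(s),x_2(s),t(s))$ by $x_2(s) := u(s)$, $x_1(s) := x_1(a) + \int_a^s v'(\sigma)/u(\sigma)\,d\sigma$, and $t(s) := 4v(s)-2x_1(s)u(s)$. Differentiating $t$ and using $x_1' = v'/u$ yields the Heisenberg horizontality identity $t' = 2x_2 x_1' - 2x_1 x_2'$ almost everywhere, and a direct calculation gives horizontal speed $\sqrt{(x_1')^2 + (x_2')^2} = \sqrt{(u')^2 + (v'/u)^2}$, which matches the Grushin speed of $\tilde\gamma$. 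Choosing $x_1(a)$ freely places the lift at any prescribed point of the coset $\Phi^{-1}(\tilde\gamma(a))$, and $\phi(\gamma(b)) = \tilde\gamma(b)$ automatically puts the endpoint in the prescribed terminal coset; infimizing over $\tilde\gamma$ then yields $d_\cX([p],[q]) \leq d_{cc}^{G}(\Phi([p]),\Phi([q]))$.

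The main obstacle will be the behavior of the lift on the singular locus $\{u=0\}$ of the Grushin plane, where $v'/u$ is formally indeterminate. I would exploit the Grushin horizontality condition $v'(s) = 0$ whenever $u(s) = 0$, together with the square-integrability of $v'/u$ along any Grushin horizontal curve of finite length, to conclude that $x_1$ is absolutely continuous and that the resulting $\gamma$ is a genuine absolutely continuous horizontal curve in $\Heis$ for which the identities above hold almost everywhere. An alternative, more conceptual route, which avoids analyzing the singular set by hand, is to invoke the Rothschild--Stein lifting theorem for H\"ormander families of vector fields, of which the present statement is the concrete finite-dimensional instance lifting the Grushin frame $\{U,V\}$ to the Heisenberg frame $\{X_2,X_1\}$.
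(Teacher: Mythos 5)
The paper does not actually prove Theorem~\ref{X and G}: it cites Arcozzi--Baldi \cite{ArcozziBaldi2008} and remarks that the statement is a special case of the Rothschild--Stein lifting theorem. Your proposal, by contrast, supplies a complete, explicit proof, and on inspection the computations are correct: with $\V$ the $x_1$-axis the decomposition $(x_1,x_2,t)=(x_1,0,0)*(0,x_2,t+2x_1x_2)$ checks out against the group law $\omega(x,x')=x_2x_1'-x_1x_2'$, and the differential of $\phi(x_1,x_2,t)=(x_2,\tfrac14(t+2x_1x_2))$ sends $X_1=\partial_{x_1}+2x_2\partial_t$ and $X_2=\partial_{x_2}-2x_1\partial_t$ to $V=u\partial_v$ and $U=\partial_u$ respectively. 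The normalizing constant $\tfrac14$ is genuinely forced by this frame-matching and is worth emphasizing: the ``obvious'' identification of $\cX$ with $\V^\perp$ via $p\mapsto(x_2,t+2x_1x_2)$ would give a bijection with $\R^2$ but not an isometry. Your lift of a Grushin control $(u',v'/u)$ to the Heisenberg control $(v'/u,u')$ is the inverse construction, and the verification $t'=2x_2x_1'-2x_1x_2'$ goes through. So this is a legitimate elementary alternative to the paper's citation-only treatment, and it makes the Rothschild--Stein picture concrete.

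One small but real imprecision: you assert that $\phi$ sends Heisenberg horizontal curves to Grushin horizontal curves \emph{of the same length}, but this fails on the singular locus. If $\gamma'=aX_1+bX_2$ and $x_2=u=0$, then $(\phi\circ\gamma)'=aV+bU=bU$ has Grushin speed $|b|$, which is strictly less than the Heisenberg speed $\sqrt{a^2+b^2}$ whenever $a\ne0$; in the extreme case $\gamma$ parametrizes the $x_1$-axis itself (a fiber of $\phi$) and its image is a single point. This does not damage your argument, because the pushforward direction only requires $\length_G(\phi\circ\gamma)\leq\length_{\Heis}(\gamma)$ and the lift direction only requires equality for the lift you construct (choose the Grushin control with $\beta=0$ a.e.\ on $\{u=0\}$, which horizontality permits); but the phrase ``same length'' should be replaced by the inequality in the pushforward step, and your handling of the singular set in the lift should be recast explicitly in terms of the control $\beta$ (with $v'=u\beta$) rather than the formal quotient $v'/u$, exactly as you suggest in your final paragraph. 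With those adjustments the proof is sound.
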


Identifying $\cX$ with $G$ and considering $\pi^R=\pi^R_{{\V}^\perp}$ as a map from $\Heis$ to $G$, we will show

\begin{proposition}\label{Heis-Grushin-foliation}
The triple $(\Heis,G,\pi^R)$ is a $2$-foliation.
\end{proposition}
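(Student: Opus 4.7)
The plan is to verify the two conditions of Definition \ref{regular} with $s = 2$: that $\pi^R$ is Lipschitz on compact sets, and that for each compact $K \subeq \Heis$ there exist constants $C \geq 1$ and $r_0 > 0$ such that the truncated preimage $(\pi^R)^{-1}(B) \cap K$ of any Grushin ball $B$ of radius $r < r_0$ is covered by at most $Cr^{-2}$ Heisenberg balls of radius $Cr$.

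For the Lipschitz property, I would use that the quotient map $\Heis \to \cX$ sending each point to its right coset is $1$-Lipschitz from $(\Heis, d_{cc})$ to $(\cX, \dist_{cc})$ by definition of the quotient metric. Theorem \ref{X and G} identifies $(\cX, \dist_{cc})$ isometrically with $G$, so $\pi^R$ is $1$-Lipschitz from $(\Heis, d_{cc})$ to $G$. The bilipschitz equivalence of $d_{cc}$ and $d_\Heis$ then yields the Lipschitz condition for $\pi^R$ with respect to the Kor\'anyi metric.

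For the covering estimate, I would work in coordinates with $\V = \R \times \{0\} \times \{0\}$ and $\V^\perp = \{0\} \times \R \times \R$; a short computation using the group law gives the explicit formula
$$\pi^R_{\V^\perp}(x_1, x_2, t) = (0, \, x_2, \, t + 2 x_1 x_2),$$
and under the identification of $\V^\perp$ with $G$ induced by Theorem \ref{X and G} this becomes $\pi^R(x_1, x_2, t) = (x_2, \, t + 2 x_1 x_2)$ in Grushin coordinates $(u, v)$. The main tools are two Ball-Box estimates. For the Grushin plane,
$$B_G((u, v), r) \subeq \{(u', v') : |u' - u| \lesssim r, \ |v' - v| \lesssim r(|u| + r)\},$$
and for $\Heis$, every Euclidean box of dimensions $r \times r \times r^2$ is covered by a bounded number of Heisenberg balls of radius comparable to $r$. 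With $K \subeq [-M, M]^3$, any point $(x_1, x_2, t) \in (\pi^R)^{-1}(B_G((u, v), r)) \cap K$ satisfies $|x_1|, |x_2| \leq M$, $|x_2 - u| \lesssim r$ (hence $|u| \lesssim M$), and $|t + 2 x_1 x_2 - v| \lesssim r(|u| + r) \lesssim Mr$. Partitioning this region by Euclidean boxes of size $r \times r \times r^2$ requires $O(M/r)$ slabs in the $x_1$-direction, $O(1)$ in the $x_2$-direction, and---since $2 x_1 x_2$ varies by $O(Mr)$ across each $(x_1, x_2)$-box while the admissible $t$-interval has length $\lesssim Mr$ for fixed $(x_1, x_2)$---$O(M/r)$ intervals in the $t$-direction. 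This yields $O(r^{-2})$ Euclidean boxes, each covered by $O(1)$ Heisenberg balls of radius $\lesssim r$, so the total is $O(r^{-2})$ as required.

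The main obstacle is the careful bookkeeping needed to handle the Grushin geometry uniformly in both the singular regime $|u| \lesssim r$ and the nonsingular regime $|u| \gtrsim r$, and to ensure that the anisotropic factor $r(|u| + r)$ in the Grushin Ball-Box does not inflate the count beyond $O(r^{-2})$; the compactness of $K$, which bounds $|u|$ in terms of $M$, is used crucially here.
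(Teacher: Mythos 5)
Your proof is correct and takes essentially the same route as the paper's: the Lipschitz property via Theorem \ref{X and G} and the comparability of $d_{cc}$ with $d_\Heis$, and the covering bound via the two-sided Grushin distance estimate \eqref{Grushin-two-sided} together with the Heisenberg Ball--Box theorem, with compactness of $K$ used to bound $|u|$. The only cosmetic difference is bookkeeping: the paper packages the $O(r^{-2})$ count as a Lebesgue volume estimate (area of the Grushin ball at most $Cr^2$, hence volume of the truncated preimage at most $Cr^2$), whereas you partition explicitly into $r\times r\times r^2$ boxes using the coordinate formula for $\pi^R$; both arguments use the same inputs and give the same conclusion, with constants depending on $K$ as Definition \ref{regular} permits.
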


Before giving the proof of Proposition \ref{Heis-Grushin-foliation} we indicate the estimates for dimension increase which follow from that proposition in combination with Theorem \ref{foliation}.

\begin{corollary}\label{Heis-Grushin-corollary}
Let $Y$ be any metric space. For $p>4$, if $f \colon \Heis \to Y$ is a continuous mapping that has an upper gradient in $\Lloc^p(\Heis^1)$, then
\begin{equation}\label{Heis-Grushin-estimate}
\dim \{w \in G: \dim (f((\pi^R)\inv(w)))\geq \alpha \} \leq 2-p\left(1-\frac{2}{\alpha}\right)
\end{equation}
for each $\alpha \in \left( 2,\frac{2p}{p-2}\right\rbrack$.
\end{corollary}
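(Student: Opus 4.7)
The plan is to deduce Corollary \ref{Heis-Grushin-corollary} as a direct specialization of Theorem \ref{foliation} to the foliation $(\Heis, G, \pi^R)$ produced by Proposition \ref{Heis-Grushin-foliation}. Essentially all of the work has already been done; what remains is to verify the standing hypotheses of Theorem \ref{foliation} in this concrete setting and then to substitute the correct parameters.

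First, I would record that the metric measure space $(\Heis, d_\Heis, \Hdim^4)$ is proper (it is a homogeneous group with the Kor\'anyi norm), globally Ahlfors $4$-regular, and supports a $p$-Poincar\'e inequality for every $p \ge 1$; the relevant references are already cited in Section~\ref{Heis foliation}. In particular, $\Heis$ is locally $Q$-homogeneous and satisfies a local $Q$-Poincar\'e inequality with $Q = 4$. Moreover the Grushin plane $G$ is a proper metric space, so the target $W = G$ satisfies the properness assumption used in Theorem~\ref{foliation} and Lemma~\ref{union of compacts}. Proposition~\ref{Heis-Grushin-foliation} supplies the structural input: $(\Heis, G, \pi^R)$ is a $2$-foliation, so $s = 2$ in the notation of Theorem~\ref{foliation}.

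Now I would apply Theorem~\ref{foliation} with $X = \Heis$, $W = G$, $\pi = \pi^R$, $Q = 4$, $s = 2$, and the given exponent $p > 4 = Q$. The admissible range for $\alpha$ in Theorem~\ref{foliation} is $\alpha \in \bigl(s, \tfrac{ps}{p-Q+s}\bigr]$, which with the present values becomes exactly $\alpha \in \bigl(2, \tfrac{2p}{p-2}\bigr]$, matching the hypothesis of the corollary. The theorem then yields
\[
\dim\{w \in G : \dim(f((\pi^R)^{-1}(w))) \ge \alpha\} \le (Q - s) - p\Bigl(1 - \frac{s}{\alpha}\Bigr) = 2 - p\Bigl(1 - \frac{2}{\alpha}\Bigr),
\]
which is precisely \eqref{Heis-Grushin-estimate}.

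Since the proof is a pure invocation of the general theorem once the foliation is identified, there is no genuine obstacle. The only mildly delicate point is ensuring that $W = G$ (with its Carnot--Carath\'eodory metric, rather than the Euclidean metric on $\reals^2$) is the right target, so that Proposition~\ref{Heis-Grushin-foliation} applies verbatim and the Hausdorff dimension appearing on the left of \eqref{Heis-Grushin-estimate} is measured in the Grushin metric; this is important because replacing $d_{cc}$ by the Euclidean metric on $\reals^2$ would change the dimension of sets and spoil the $s$-regularity of $\pi^R$. With this identification fixed, the proof is just one line.
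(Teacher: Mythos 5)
Your proposal is correct and follows the paper's own route: the corollary is obtained exactly by combining Proposition \ref{Heis-Grushin-foliation} (that $(\Heis,G,\pi^R)$ is a $2$-foliation) with Theorem \ref{foliation} applied with $Q=4$, $s=2$, $p>4$, using the standard facts that $(\Heis,d_\Heis,\Hdim^4)$ is proper, Ahlfors $4$-regular, and supports a $p$-Poincar\'e inequality. Your remark that the dimension on the parameter side must be taken in the Grushin Carnot--Carath\'eodory metric is also consistent with the paper's setup.
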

Note that the upper bound in \eqref{Heis-Grushin-estimate} is identical to the one obtained in the classical Euclidean setting for the foliation of $\R^4$ by a two-dimensional family of parallel $2$-planes. This is consistent with the fact that the Grushin plane has Hausdorff dimension two, the Heisenberg group has Hausdorff dimension four and the typical leaf in the foliation has Hausdorff dimension two.

In the proof of Proposition \ref{Heis-Grushin-foliation}, we will make use of the following explicit two-sided estimate for the CC metric in the Grushin plane $G$: there exists an absolute constant $C_1 \ge 1$ so that
\begin{equation}\label{Grushin-two-sided}
\frac1{C_1} \le \frac{\max \{ |u_1-u_2| , \min \{ \sqrt{|v_1-v_2|} , \frac{|v_1-v_2|}{\max\{|u_1|,|u_2|\}} \} \}}{d_{cc}((u_1,v_1),(u_2,v_2))} \le C_1
\end{equation}
whenever $(u_1,v_1) \ne (u_2,v_2)$. See, for instance, Bella{\"\i}che \cite{Bell1996} or Seo \cite{Seo2011}. Here we interpret the quantity
$$
\frac{|v_1-v_2|}{\max\{|u_1|,|u_2|\}}
$$
to be $+\infty$ if $u_1=u_2=0$.

\begin{proof}[Proof of Proposition \ref{Heis-Grushin-foliation}]
Theorem \ref{X and G} shows that the projection $\pi^R:\Heis\to G$ is a $1$-Lipschitz mapping from $(\Heis,d_{cc})$ to $(G,d_{cc})$. It follows that $\pi^R$ is Lipschitz from $(\Heis,d_\Heis)$ to $(G,d_{cc})$.

Now suppose that $K$ is a compact subset of $\Heis$ and that $B_0=B_{cc}(w_0,r)$ is a ball in $G$ with radius $r<1$. We claim that $(\pi^R)\inv(B_0)\cap K$ can be covered by $C/r^2$ balls in the Kor\'anyi metric $d_\Heis$ of radius $Cr$. Here $C>0$ denotes a quantity, possibly varying at each instance, depending only on $K$. By the Ball-Box Theorem (cf.\ the proof of Proposition \ref{horizontal leaves} above), it suffices to show that $(\pi^R)\inv(B_0)\cap K$ can be covered by $C/r$ Euclidean balls of radius $Cr$. We will prove the latter statement by volume considerations; it is enough to prove that the Lebesgue volume of $(\pi^R)\inv(B_0)\cap K$ is less than or equal to $Cr^2$.

In order to compute the volume, we need good control on the Lebesgue area of the Grushin CC ball $B_{cc}(w_0,r)$. Since vertical translation is an isometry of $G$, it suffices to consider balls $B_{cc}(w_0,r)$ centered on the $v$-axis, i.e., $w_0 = (u_0,v_0)$ with $v_0=0$. As $\pi^R$ is Lipschitz on compact sets, we may assume that $|u_0|<C$. Denote by $A(w_0,r)$ the Lebesgue area of $B_{cc}(w_0,r)$. For $(u,v)$ in this ball, \eqref{Grushin-two-sided} implies that 
\begin{align*}  |u-u_0|\cdot |v| & \leq Cr\max\left\{r^2,r(\max \{|u|,|u_0|\})\right\} \\
&\leq  Cr\left(r^2+r(|u|+|u_0|)\right) \\
&\leq Cr\left(r^2 + r(r+2|u_0|)\right) \\
&\leq C(r^2+r^3) \\
&\leq Cr^2.
\end{align*}

%If $w_0=(0,0)$ and $w = (u,v) \in B_{cc}(w_0,r)$ then \eqref{Grushin-two-sided} implies that
%$$
%\max\left\{ |u|, \min \left\{ \sqrt{|v|} , \frac{|v|}{|u|} \right\} \right\} \le r' := C_1 r;
%$$
%hence $|u|\le r'$ and $|v| \le (r')^2$. Consequently in this case $A(w_0,r) \le 4(r')^3 \le Cr^2$ (since $r'=C_1r$ and $r\le 1$).
%
%If $w_0 = (u_0,0)$ for some $u_0 \ne 0$ and $w = (u,v) \in B_{cc}(w_0,r)$, then \eqref{Grushin-two-sided} implies that
%$$
%\max\left\{ |u-u_0|, \min \left\{ \sqrt{|v|} , \frac{|v|}{\max\{|u|,|u_0|\}} \right\} \right\} \le r' := C_1 r;
%$$
%hence $|u-u_0| \le r'$ and either $|v| \le (r')^2$ or $|v| \le r' \max \{|u|,|u_0|\}$.
%We consider two cases. If $r'<2|u_0|$ then $A(w_0,r) \le 4(r')^2|u_0|+(r')^3 \le C r^2$. If $r'>2|u_0|$ then $A(w_0,r) \le 4(r')^2|u_0|+(r')^3+(r')(r'-2|u_0|)^2 \le C r^2$ as before.
We conclude that $A(w_0,r) \le C r^2$. By an argument similar to that in the proof of Proposition~\ref{horizontal leaves},
it follows that the Lebesgue volume of $(\pi^R)\inv(B_0)\cap K$ is less than or equal to $Cr^2$, as desired. This completes the proof of Proposition \ref{Heis-Grushin-foliation}.
\end{proof}

%\subsection{Foliations of product Carnot groups}

%A \emph{Carnot group} is a connected and simply connected Lie group $\mbf{G}$ whose Lie algebra $\mathfrak{g}$ has a nilpotent %stratification; i.e., $\mathfrak{g} = V_1 \bigoplus \hdots \bigoplus V_r$, where $\lbrack V_1,V_j \rbrack = V_{j+1}$ for $j=1,\hdots %,r-1$ and $\lbrack V_1,V_r \rbrack = \{0\}.$ The natural number $r$ is the \emph{step} of the group $\mbf{G}$. A Carnot group may be %equipped with the \emph{Carnot-Carath\'eodory} metric, with respect to which the Haar measure is Ahlfors regular in the %\emph{homogeneous dimension} $Q_{\mbf{G}} = \sum_{i=1}^r i \dim V_i$. We refer to \cite{Seen} and \cite{Hardy} for further details %about Carnot groups.

%In the case of a Carnot group $\mbf{G} = \mbf{G}_1 \times \mbf{G}_2$ of product type, the natural projection $\pi \colon \mbf{G} \to %\mbf{G}_1$ is a Lipschitz surjection and defines a $Q_{\mbf{G}_2}$-metric foliation, the leaves of which are copies of $\mbf{G}_2$. %Note that the parameterizing space of this foliation is not Euclidean, and so the methods of \cite{DimDistHeis} based on the %Radon--Nikodym theorem do not apply.

\section{Open problems and questions}\label{questions section}

In this final section we collect several open problems and questions motivated by the present work.

Note that every $s$-foliation is also an $s'$-foliation for each $s'\ge s$. Let us call the {\it minimal foliation exponent} for a metric foliation $\pi:X \to W$ the infimum of the values $s$ for which $\pi$ is an $s$-foliation.  The following question is inspired by Theorem \ref{universal-sharp}.

\begin{question}\label{q}
Let $X$ be a locally Ahlfors $Q$-regular metric space and assume that $\pi \colon X \to W$ is an $s$-foliation, where $s$ is the minimal foliation exponent. Assume also that $\hat{s}=\dim \pi \inv (a)$ is independent of $a \in W$. Let $p>Q$, let $\alpha \in [\hat{s}, \frac{p\hat{s}}{p-Q+s}]$, and let $N$ be an integer greater than $\alpha$. If $W$ possesses a subset $E$ that is evenly coverable and has dimension
$$\beta = (Q-s) - p\left(1-\frac{\hat{s}}{\alpha}\right),$$
then does there exists a continuous mapping $f \colon X \to \reals^N$ with an upper gradient $\Lloc^p(X)$ such that
$$\dim(f(\pi\inv(a))) \geq \alpha$$
for all $a \in E$?
\end{question}
We believe that the answer to this question is yes and give a similar construction in \cite{DimDistHeis}.

As regards the positive result in Theorem \ref{foliation}, the discussion in Remark \ref{limitations} shows that the estimates given in that theorem are not natural in case
$$
\hat{s} := \sup \{ \dim \pi\inv(a) : a \in W \} < s.
$$
In view of Question \ref{q}, one might instead wish for the estimate
\begin{equation}\label{s-hat-estimate}
\dim \{a \in W: \Hdim^\alpha(f(\pi\inv(a)))>0\} \leq (Q-s)-p\left(1-\frac{\hat{s}}{\alpha}\right)
\end{equation}
as the conclusion of Theorem \ref{foliation}. We are not able to prove the estimate \eqref{s-hat-estimate} for the standard foliation of the Heisenberg group $\Heis$ by horizontal lines, although in our forthcoming work \cite{DimDistHeis} a very similar estimate is achieved for when $\alpha$ is close to $\hat{s}$.  One could also inquire if the yet weaker estimate
\begin{equation}\label{weak s-hat-estimate}
\dim \{a \in W: \Hdim^\alpha(f(\pi\inv(a)))>0\} \leq (Q-\hat{s})-p\left(1-\frac{\hat{s}}{\alpha}\right)
\end{equation}
holds; in \cite{DimDistHeis} we achieve this estimate for the foliation of the Heisenberg group by horizontal lines when $\alpha$ is close to the universal bound.
\begin{question}
In which situations are the estimates \eqref{s-hat-estimate} and \eqref{weak s-hat-estimate} valid?
\end{question}

The proof of Theorem \ref{universal-sharp} suggests a general meta-theorem deriving prevalence theorems for dimension increase from specific examples.

Let $(X,d)$ be a metric space and let real numbers $s$ and $\alpha$ and an integer $N$ satisfy $0<s<\dim X$ and $s\le\alpha<N$. Let us say that a normed linear class $\cF$ of mappings from $X$ to $\R^N$ is {\it $(s,\alpha)$-prevalence forcing} in case the following condition holds: if to each compact set $E\subset X$ with $\cH^s(E)>0$ there corresponds a mapping $f \in \cF$ such that $\dim f(E) \ge \alpha$, then the set of all mappings in $\cF$ with that property is prevalent.

Theorem \ref{universal-sharp} asserts that the Sobolev--Newtonian class ${\rm N}^{1,p}(X;\R^N)$ on a locally Ahlfors $Q$-regular metric measure space $(X,d,\mu)$ is $(s,\tfrac{ps}{p-Q+s})$-prevalence forcing for each $0<s<Q<p$.

\begin{question}
Let $(X,d,\mu)$ be a metric measure space and let $N\in\nats$. For given $\alpha$ and $s$ satisfying the preceding constraints, which normed linear classes $\cF$ of mappings from $X$ to $\R^N$ are $(s,\alpha)$-prevalence forcing?
\end{question}

Finally, motivated by the discussion in Section \ref{Heis foliation}, we pose

\begin{question}
Is there a $1$-foliation of the Heisenberg group?
\end{question}

\bibliographystyle{acm}
\bibliography{DimDistMetric}
\end{document}